\shorttitle{Inverse problems for regular variation} 
\newcommand{\red}{\color{black}} 
\definecolor{darkblue}{rgb}{.1, 0.1,.8} 
\definecolor{darkgreen}{rgb}{0,0.8,0.2} 
\definecolor{darkred}{rgb}{.8, .1,.1}
\newcommand{\id }{infinitely divisible} 
\newcommand{\ts}{time series} 
\newcommand{\tsa}{\ts\ analysis}
\newcommand{\RV}{{\rm RV}}
\newcommand{\bfz}{{\bf z}} 
\newcommand{\bth}{\begin{theorem}} 
\newcommand{\ethe}{\end{theorem}} 
\newcommand{\bre}{\begin{remark}\em } 
\newcommand{\ere}{\end{remark}} 
\newcommand{\ble}{\begin{lemma}} 
\newcommand{\ele}{\end{lemma}}
\newcommand{\bde}{\begin{definition}} 
\newcommand{\ede}{\end{definition}} 
\newcommand{\bco}{\begin{corollary}} 
\newcommand{\eco}{\end{corollary}} 
\newcommand{\bpr}{\begin{proposition}} 
\newcommand{\epr}{\end{proposition}} 
\newcommand{\bexer}{\begin{exercise}} 
\newcommand{\eexer}{\end{exercise}} 
\newcommand{\bexam}{\begin{example}\rm } 
\newcommand{\eexam}{\end{example}}
\newcommand{\btab}{\begin{tab}} 
\newcommand{\etab}{\end{tab}} 
\newcommand{\rv}{random variable}
\newcommand{\sign}{{\rm sign}}
\newcommand{\rhs}{right-hand side}
\newcommand{\beao}{\begin{eqnarray*}} 
\newcommand{\eeao}{\end{eqnarray*}\noindent} 
\newcommand{\beam}{\begin{eqnarray}} 
\newcommand{\eeam}{\end{eqnarray}\noindent} 
\newcommand{\beqq}{\begin{equation}} 
\newcommand{\eeqq}{\end{equation}\noindent} 
\newcommand{\bce}{\begin{center}} 
\newcommand{\ece}{\end{center}} 
\newcommand{\barr}{\begin{array}} 
\newcommand{\earr}{\end{array}}
\newcommand{\stv}{\stackrel{v}{\rightarrow}}
\newcommand{\eqd}{\stackrel{d}{=}}
\newcommand{\vague}{\stackrel{\lower0.2ex\hbox{$\scriptscriptstyle 
                    \it{v} $}}{\rightarrow}} 
\newcommand{\weak}{\stackrel{\lower0.2ex\hbox{$\scriptscriptstyle 
                    \it{w} $}}{\rightarrow}} 
\newcommand{\what}{\stackrel{\lower0.2ex\hbox{$\scriptscriptstyle 
                    \it{\hat{w}} $}}{\rightarrow}} 
\newcommand{\bdis}{\begin{displaymath}} 
\newcommand{\edis}{\end{displaymath}\noindent}
\newcommand{\R}{\mathbb{R}} 
\newcommand{\xbold}{{\mathbf{x}}} 
\newcommand{\bx}{\xbold} 
\newcommand{\ybold}{{\mathbf{y}}} 
\newcommand{\by}{\ybold} 
\newcommand{\zbold}{{\mathbf{z}}} 
\newcommand{\bz}{\zbold}
\newcommand{\nto}{n\to\infty}
\newcommand{\ov}{\overline} 
\newcommand{\wh}{\widehat} 
\newcommand{\vep}{\varepsilon}
\newcommand{\regvary}{regularly varying} 
\newcommand{\regvar}{regular variation} 
\newcommand{\bbr}{{\mathbb R}}
\newcommand{\bbz}{{\mathbb Z}} 
\newcommand{\bbd}{{\mathbb D}} 
\newcommand{\bbs}{{\mathbb S}} 
\newcommand{\bbq}{{\mathcal Q}}
\newcommand{\con}{convergence}
\newcommand{\st}{such that} 
\newcommand{\fif}{if and only if}
\newcommand{\ds}{distribution}
\newcommand{\seq}{sequence}
\newcommand{\ms}{measure}
\newcommand{\bfx}{{\bf x}} 
\newcommand{\bfX}{{\bf X}} 
\newcommand{\bfY}{{\bf Y}} 
\newcommand{\bfy}{{\bf y}} 
\newcommand{\bfA}{{\bf A}}
\newcommand{\bfZ}{{\bf Z}} 
\newcommand{\bfa}{{\bf a}} 
\newcommand{\bfd}{{\bf d}} 
\newcommand{\bfe}{{\bf 1}}
\newcommand{\bfs}{{\bf s}}
\begin{document}

\title{General inverse problems for 
  regular variation} 
 
\authorone[University of Wroclaw]{Ewa Damek} 
\authortwo[University of Copenhagen]{Thomas Mikosch} 
\authorthree[University of Tennessee]{Jan Rosi\'nski} 
\authorfour[Cornell University]{Gennady Samorodnitsky} 
\addressone{Mathematical Institute,  
Wroclaw University, 
Pl. Grunwaldzki 2/4, 
50-384 Wroclaw, 
Poland, Ewa.Damek@math.uni.worc.pl} 
\addresstwo{Department of Mathematics, 
University of Copenhagen, 
Universitetsparken~5, 
DK-2100 Copenhagen, 
Denmark, mikosch@math.ku.dk} 
\addressthree{Department of Mathematics, 
227 Ayres Hall, 
University of Tennessee, 
Knoxville, TN 37996-1320, 
U.S.A., rosinski@math.utk.edu} 
\addressfour{School of Operations Research and Information Engineering, 
and Department of Statistics, 
Cornell University, 
220 Rhodes Hall, 
Ithaca, NY 14853,  
U.S.A., gs18@cornell.edu}


\begin{abstract} 
Regular variation of distributional tails is known to be preserved by 
various linear transformations of some random structures.  
An inverse problem for regular 
variation aims at understanding whether the regular variation of a 
transformed random object is caused by regular variation of components 
of the original random structure. In this paper we build up on previous 
work and derive results in the multivariate case and in  
situations where regular variation is 
not restricted to one particular direction or quadrant.  
\end{abstract} 
\keywords{Regular variation, inverse problem, linear process, 
  Breiman's result, random matrix} 
\ams{60E05}{60F05} 
\section{Introduction}\label{sec:1} 
\setcounter{equation}{0} 
The four 
authors of this paper are very much honored to contribute  
to this special issue of one of the oldest journals in applied 
probability. We wish happy birthday and a very long life to this 
excellent journal.   
Two of us, Thomas Mikosch and Gennady Samorodnitsky,  
were inivited to contribute short papers   
to this special issue. With the permission of the editors, we merged 
efforts leading to this longer and more substantial paper.

 In this paper we study regular variation of the tails of measures  
on $\bbr^d$, most 
importantly probability measures. Stated somewhat 
vaguely, it is well known that regular variation tends to be 
preserved by various linear operations (such as linear transformations of the 
space, convolutions, integrals, etc.) We would like to understand to 
what degree the inverse statements are valid. That is, if the result 
of a linear operation on a measure is regularly varying in the 
appropriate space, was the original measure necessarily regularly 
varying as well? 
 
This type of questions is often referred to as {\em inverse problems 
for regular variation,} and in the previous paper 
\cite{jacobsen:mikosch:rosinski:samorodnitsky:2008}  a fairly complete 
answer to this problem for certain non-negative linear transformations 
of one-dimensional measures was given.  Our aims in this paper are  
to treat the inverse problem in the multivariate case and to get rid 
of the non-negativity assumption on the linear transformations.  
We are fairly successful in our latter task,  but only partially in
the former one.   
 
Now we will be more precise. Let $\R^d_0 = \R^d \backslash \{\bf0\}$  and  
$\ov\bbr_0^d =\ov\bbr^d \backslash \{\bf0\}$, where $\ov\bbr =[-\infty, \infty]$. Recall that a random vector $\bfX$ with values in $\bbr^d$ is said to be {\em \regvary } if there exists a non-null Radon  \ms\ $\mu_\bfX$ on 
the Borel $\sigma$-field of $\ov\bbr_0^d$ (that does not charge the set of infinite points) \st  
\beao 
\dfrac{P(s^{-1} \bfX\in \cdot )}{P(\|\bfX\|>s)}\stv \mu_{\bfX}\,,\quad 
\mbox{ as $s\to\infty$,} 
\eeao 
where $\stv$ stands for vague \con\ on the Borel $\sigma$-field of  
$\ov\bbr_0^d$; see 
e.g.   \cite{kallenberg:1983} or  
\cite{resnick:1987}. Recall that, in this context, a Borel measure is 
Radon if it is finite outside of any ball of positive radius 
centered at the origin.   
The \ms\ $\mu_{\bfX}$ necessarily satisfies the relation 
$\mu_{\bfX}(tA)=t^{-\alpha}\mu_{\bfX}(A)$, $t>0$, for all Borel sets 
$A$, some 
$\alpha> 0$. We will refer to $\alpha$ as the {\em index of \regvar } and 
$\mu_\bfX$ as the {\em tail \ms.} The notion of regular variation applies 
equally well to $\sigma$-finite Borel measures on $\bbr^d$ that are  
finite outside of any ball of a positive radius 
centered at the origin. Specifically, any such measure $\nu$  
is said to 
be {\em \regvary } if, as above, there is a non-null Radon  \ms\ $\mu$ 
on $\ov\bbr_0^d$ that does not charge the set of infinite 
points \st 
\beao 
\dfrac{\nu( s\cdot)}{ \nu(\{ \bfy:\, \|\bfy\|>s\})}\stv \mu\,,\quad 
\mbox{ as $s\to\infty$.} 
\eeao 
As in the case of probability measures, the limiting measure $\mu$ 
scales with index $\alpha>0$. We will write 
$\nu \in {\rm RV}(\alpha,\mu)$. Of course, this language allows the 
measure  
$\nu$ to be the law of a random vector $\bfX$, but in the case 
of random vectors it is even more common to simply write  
$\bfX\in {\rm RV}(\alpha,\mu_\bfX)$. 
\par 
To give a taste of linear operations on regularly 
varying measures we have in mind, we proceed with examples. The reader 
will notice that these examples are more general versions of the 
corresponding examples in 
\cite{jacobsen:mikosch:rosinski:samorodnitsky:2008}.  
 
\begin{example} \label{ex:linear.process} {\rm  ({\it Weighted sums}).  
\ Let $\Psi_j$, $j=1,2,\ldots$ be (non-random) $d\times m$ matrices 
and $(\bfZ^{(j)})$ an iid \seq\ of \regvary\ $\bbr^m$-valued random 
(column) vectors with a generic element $\bfZ\in{\rm 
  RV}(\alpha,\mu_{\bfZ})$.  
Then under appropriate size conditions on the matrices $(\Psi_j)$, the 
series $\bfX=\sum_{j=1}^\infty \Psi_j\,\bfZ^{(j)}$ converges with 
probability 1, and $\bfX$ is \regvary\ 
with index $\alpha$ and 
\beam\label{eq:10} 
\dfrac{P(s^{-1}\bfX \in \cdot)}{P(\|\bfZ\|>s)}\stv \sum_{j=1}^\infty 
\mu_{\bfZ}\circ \Psi_j^{-1}\,, \quad 
\mbox{ as $s\to\infty$}, 
\eeam 
assuming that the \rhs\ does not vanish; see  
\cite{hult:samorodnitsky:2008,hult:samorodnitsky:2010}. This statement is always true if the 
sum is finite; see  
\cite{resnick:willekens:1991},  \cite{basrak:davis:mikosch:2002a}.  
 
Is the converse statement true? That is, if $\bfX$ is \regvary , does 
it follow that the iid vectors $\bfZ_i$ are regularly varying as well? 
In  \cite{jacobsen:mikosch:rosinski:samorodnitsky:2008} 
this problem was solved   
for iid {\it real-valued} $Z_i$ and {\em non-negative} scalars $\Psi_j=\psi_j$. 
(Here and in what follows, we use the symbol $\psi_j$ for scalars 
instead of genuine matrices $\Psi_j$.) It was shown that  (under 
appropriate size conditions in the case of an infinite sum),  
$Z_i$ inherits 
\regvar\ with index $\alpha$ from $X$ if the condition 
\beam\label{eq:1} 
\sum_{j=1}^{{\red \infty}} \psi_j^{\alpha+i\theta} \ne 0\,, \quad \mbox{for all} \quad  \theta\in \bbr\,, 
\eeam 
holds. Moreover, if \eqref{eq:1} fails, then one can find iid $(Z_i)$ 
which are not \regvary\ but $X$ is \regvary . In this paper we want 
to extend the result to the multivariate case and/or  
drop the assumption of non-negative coefficients.  
}  
\end{example} 
 
\begin{example} \label{ex:products} {\rm  ({\it Products}).  
\ Let $\bfZ\in{\rm 
  RV}(\alpha,\mu_{\bfZ})$ be a  random (column) vector in $\bbr^m$ and   
$\bfA$ be a random $d\times m$ matrix, independent of $\bfZ$ such 
that its matrix norm satisfies $E\| \bfA\|^{\alpha+\vep}<\infty$ for 
some $\vep>0$. Then $\bfX = \bfA\bfZ$ is regularly varying with index 
$\alpha$ in $\bbr^d$, and 
\beam\label{e:product.regvar} 
\dfrac{P(s^{-1}\bfX \in \cdot)}{P(\|\bfZ\|>s)}\stv E\Bigl[ 
\mu_{\bfZ}\circ \bfA^{-1}\Bigr]\, \quad 
\mbox{ as $s\to\infty$,} 
\eeam 
provided the measure on the right-hand side does not 
vanish; see \cite{basrak:davis:mikosch:2002b}. Once again, is the 
converse statement true? That is, if $\bfX$ is \regvary , does 
it follow that the vector $\bfZ$  is \regvary\ (assuming that the random matrix  
$\bfA$ is suitably small)? In \cite{jacobsen:mikosch:rosinski:samorodnitsky:2008} it was shown 
that, if $A$ and $Z$ are real-valued and $A>0$, then $Z$ inherits 
\regvar\ with index $\alpha$ from $X$ if and only if  
\beam\label{e:cond.prod} 
E\bigl[ A^{\alpha+i\theta}\bigr] \ne 0\,, \quad  \mbox{for all} \quad  \theta\in \bbr\,. 
\eeam 
We would like to remove the restriction to one dimension and 
the assumption of non-negativity.  
} 
\end{example}{\red 
As in the one-dimensional non-negative case, these questions turn 
out to be related to a certain cancellation property of measures, 
which we address in Section \ref{sec:cancellation}. The proof of 
the cancellation property requires some abstract Fourier analytic
arguments. The reader interested in applications of these results in
the spirit of Examples~\ref{ex:linear.process} and \ref{ex:products}
is referred to Section~\ref{sec:sums}--\ref{sec:non-diagonal}. In
Section~\ref{sec:sums} we study the inverse problem for
weighted sums of a multivariate iid \seq .
In Section~\ref{sec:products} we consider the corresponding problem for
matrix products, where the random matrix has diagonal structure. Some
examples in the case of non-diagonal deterministic matrices are given
in Section~\ref{sec:non-diagonal}. While the results in
Section~\ref{sec:sums} yield a rather complete picture for weighted
sums, the results in the remaining sections are of example-type leaving
space for further investigations.}

\section{The generalized cancellation property}  
\label{sec:cancellation} 
 
\setcounter{equation}{0} 
 
Let $\rho$ and $\nu$ be $\sigma$-finite measures on $\bbr^d$.  
We define the {\it multiplicative convolution} of $\nu$ and $\rho$  
as a (not necessarily $\sigma$-finite) measure on $\bbr^d$ given by  
\beao 
(\nu \circledast \rho)\, (B) = \int_{\bbr^d} \nu\bigl( T_\bx^{-1}(B)\bigr)\, 
\rho(d\bx), \ \ \text{any  Borel set $B\subset \bbr^d$\,,} 
\eeao 
where $T_\bx={\rm diag}(\bx)$ for $\bx\in\bbr^d$. 
 
We start with the following result that will motivate the cancellation 
property discussion in the sequel. 
\begin{theorem} \label{t:subseq.char} 
Assume $\alpha>0$ and let $\rho$, $\nu$ be $\sigma$-finite 
measures, such that $\rho$ is not concentrated on any proper 
coordinate subspace of $\bbr^d$, that is, 
\begin{equation} \label{e:rho.nond} 
\inf_{j=1,\ldots,d}\rho\bigl( \{ \bx:\, x_j\not=0\}\bigr)>0 \,, 
\end{equation}  
$\nu \circledast \rho\in {\rm RV}(\mu,\alpha)$ and for some 
$0<\delta'<\alpha$, 
\begin{equation} \label{e:small.rho} 
\int_{\bbr^d} \bigl( \|\by\|^{\alpha-\delta'}\vee 
\|\by\|^{\alpha+\delta'}\bigr)\, \rho(d\by)<\infty\,, 
\end{equation} 
and for each $j=1,\ldots, d$, 
\begin{equation} \label{e:small.tail} 
\lim_{b\downarrow 0}\limsup_{s\to\infty} \frac{\int_{0<|y_j|\leq b} 
  \rho\bigl( \{ \bx:\, |x_j|>s/|y_j|\}\bigr)\, \nu(d\by)}{(\nu 
  \circledast \rho) \bigl( \{ \bx:\,  \|\bx\|>s\}\bigr)}=0\,. 
\end{equation} 
Then the family of measures on $\bbr_0^d$ given by 
\begin{equation} \label{e:tight.fam} 
\mu_s(\cdot) = \frac{\nu(s\cdot)}{(\nu 
  \circledast \rho) \bigl( \{ \bx:\,  \|\bx\|>s\}\bigr)}, \ \ s\geq 
1\,, 
\end{equation} 
is relatively compact in the vague topology on $\ov\bbr_0^d$. Further, any 
limiting (as $s\to\infty$) point $\mu_\ast$ does not charge the set of infinite 
points and satisfies the equation  
\begin{equation} \label{e:right.rel} 
\mu_\ast  \circledast \rho = \mu\,. 
\end{equation} 
\end{theorem}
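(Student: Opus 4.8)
The plan is to prove relative compactness first and then identify the limit. For relative compactness in the vague topology on $\ov\bbr_0^d$, by standard criteria (see \cite{kallenberg:1983}) it suffices to show that $\sup_{s\geq 1}\mu_s(\{\bx:\,\|\bx\|>\delta\})<\infty$ for every $\delta>0$. First I would bound $\nu(s\cdot)$ on the set $\{\|\bx\|>\delta\}$ from above using the multiplicative convolution: for a fixed $\bx$ with $\|\bx\|>\delta$, if we multiply by a vector $\by$ with all coordinates bounded below and above (say $\by$ in a fixed compact set $K\subset\bbr_0^d$ bounded away from the coordinate hyperplanes and from infinity), then $T_\by\bx$ stays in a region $\{\|\bz\|>c\delta\}$. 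Combined with the nondegeneracy hypothesis \eqref{e:rho.nond}, which guarantees that $\rho$ puts mass on each half-space $\{x_j\neq 0\}$, one can extract from $\rho$ a piece $\rho_0$ (restrict $\rho$ to a suitable set on which all coordinates are comparable to $1$) with $\rho_0(\bbr^d)>0$ and estimate
\begin{equation*}
(\nu\circledast\rho)(\{\|\bz\|>c\delta\}) \;\geq\; \int \nu\bigl(T_{\by}^{-1}(\{\|\bz\|>c\delta\})\bigr)\,\rho_0(d\by)\;\geq\; \mathrm{const}\cdot\nu(\{\|\bx\|>\delta\})\cdot\rho_0(\bbr^d)\,,
\end{equation*}
which gives the uniform bound after dividing by the normalization and using regular variation of $\nu\circledast\rho$ to control the ratio $(\nu\circledast\rho)(\{\|\bz\|>c\delta\})/(\nu\circledast\rho)(\{\|\bz\|>s\})$ rescaled appropriately. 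This is the step I expect to require the most care, because it is exactly here that the nondegeneracy condition \eqref{e:rho.nond} is indispensable: without it, $\rho$ could live on a coordinate subspace and $\nu\circledast\rho$ would lose all information about the complementary coordinates of $\nu$, so no such lower bound could hold.

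Next I would show that any vague limit point $\mu_\ast$ does not charge the infinite points. This follows from the moment condition \eqref{e:small.rho} together with \eqref{e:small.tail}: condition \eqref{e:small.rho} controls the contribution of $\rho$-mass of very large or very small norm, and \eqref{e:small.tail} controls, coordinate by coordinate, the contribution to $\nu\circledast\rho$ coming from pairs where $\by$ has a small coordinate $|y_j|$ (which is precisely the mechanism that could push mass of $\mu_s$ out to infinity in the $j$-th direction). A uniform integrability / truncation argument then shows that $\mu_s(\{\bx:\,|x_j|>M\})\to 0$ as $M\to\infty$ uniformly in $s$, hence $\mu_\ast$ is Radon on $\ov\bbr_0^d$ and charges no infinite point.

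Finally, to establish the equation \eqref{e:right.rel}, I would pass to the limit in the identity
\begin{equation*}
(\nu\circledast\rho)(sB)=\int_{\bbr^d}\nu\bigl(s\,T_{\by}^{-1}(B)\bigr)\,\rho(d\by)\,,
\end{equation*}
divide both sides by $(\nu\circledast\rho)(\{\|\bx\|>s\})$, and let $s\to\infty$ along a subsequence for which $\mu_s\stv\mu_\ast$. The left side converges to $\mu(B)$ by regular variation of $\nu\circledast\rho$ (for $B$ a $\mu$-continuity set bounded away from $\bfO$). For the right side, for each fixed $\by$ away from the coordinate hyperplanes, $\nu(s\,T_\by^{-1}(B))/(\nu\circledast\rho)(\{\|\bx\|>s\})\to\mu_\ast(T_\by^{-1}(B))$ along the subsequence whenever $T_\by^{-1}(B)$ is a $\mu_\ast$-continuity set; then a dominated convergence argument — with the dominating function supplied again by \eqref{e:small.rho} (large and small $|\by|$) and \eqref{e:small.tail} together with the no-charge-at-infinity property just established (small coordinates of $\by$) — lets me interchange limit and $\rho$-integral, yielding $\int\mu_\ast(T_\by^{-1}(B))\,\rho(d\by)=(\mu_\ast\circledast\rho)(B)$. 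Equating the two limits gives $(\mu_\ast\circledast\rho)(B)=\mu(B)$ on a class of sets rich enough to force equality of the measures, i.e.\ \eqref{e:right.rel}. The main obstacle throughout is the justification of the dominated-convergence step near the coordinate hyperplanes, where $T_\by^{-1}$ blows up; this is what conditions \eqref{e:small.rho} and \eqref{e:small.tail} are tailored to handle.
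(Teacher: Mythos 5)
Your overall architecture matches the paper's (bound $\nu$ above by the multiplicative convolution for tightness; identify the limit by passing to the limit in the convolution identity with dominated convergence), but your tightness step contains a genuine error. You claim that \eqref{e:rho.nond} lets you extract a piece $\rho_0$ of positive total mass supported on a set where \emph{all} coordinates are simultaneously comparable to $1$. That does not follow: \eqref{e:rho.nond} is a condition on each coordinate separately, and is satisfied by, e.g., $\rho=\delta_{(1,0)}+\delta_{(0,1)}$ in $d=2$ (which also satisfies \eqref{e:small.rho} and \eqref{e:small.tail}), yet every set on which both coordinates are bounded away from $0$ is $\rho$-null, so your lower bound on $(\nu\circledast\rho)$ collapses. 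The repair is to argue coordinate by coordinate, as the paper does: pick $\theta,\delta>0$ with $\rho\bigl(\{\bx:|x_j|\ge\theta\}\bigr)\ge\delta$ for every $j$, use $\rho\bigl(\{|x_j|\ge\theta\}\bigr)\,\nu\bigl(\{|x_j|>s/\theta\}\bigr)\le(\nu\circledast\rho)\bigl(\{\|\bx\|>s\}\bigr)$, and combine with $\nu\bigl(\{\|\bx\|>s\}\bigr)\le\sum_{j=1}^d\nu\bigl(\{|x_j|>s/d\}\bigr)$. This gives $\mu_s$ of the complement of the ball of radius $\tau$ a bound that, by regular variation of $\nu\circledast\rho$, is finite uniformly in large $s$ and tends to $0$ as $\tau\to\infty$; note that this same estimate is what yields the no-charge-at-infinity claim for any limit point --- your attribution of that claim to \eqref{e:small.rho} and \eqref{e:small.tail} is off, since those conditions are only needed for identifying the limit.

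In the identification step your plan is the right one, but the two points you flag as delicate are left unresolved, and they are exactly where the paper does the work. First, to use vague convergence pointwise you need $T_\by^{-1}(B)$ to be a $\mu_\ast$-continuity set for $\rho$-a.e.\ $\by$, and you need a determining class of sets $B$; the paper takes $B$ to be rectangles $\prod_{j=1}^d[a_j,\infty)$ (and their analogues in the other quadrants) with $\bfa$ avoiding countable exceptional sets, via the sets $D_{\bfa}$ and the argument from \cite{jacobsen:mikosch:rosinski:samorodnitsky:2008}, and then extends quadrant by quadrant using continuity of measures. Second, the dominated-convergence step is not a single application of dominated convergence: the paper splits the $\rho$-integral into $\{|z_1|\le Ms_k\}$, where the Potter bounds together with \eqref{e:small.rho} supply an integrable dominating function uniformly in $k$, and $\{|z_1|>Ms_k\}$, which is handled in the iterated limit $k\to\infty$ then $M\to\infty$ using \eqref{e:small.rho} for one piece and \eqref{e:small.tail} for the other. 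These details must be supplied for the proof to be complete, though they do not change your overall strategy.
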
  
\begin{proof} 
By \eqref{e:rho.nond},  we can choose  
$\theta>0$ such that $\rho\bigl( \{ \bx:\, |x_j|\geq 
\theta\}\bigr)\geq \delta>0$ for every $j=1,\ldots, d$, {\red and a
sufficiently small 
$\delta$.} For every  
$j$ and $s>0$,   
$$ 
\rho\bigl( \{ \bx:\, |x_j|\geq \theta\}\bigr) \nu\bigl( \{ \bx:\, |x_j|>s/\theta 
\}\bigr) \leq (\nu \circledast \rho) \bigl( \{ \bx:\, 
\|\bx\|>s\}\bigr)\,. 
$$ 
Therefore, 
\beao 
\lefteqn{\nu\bigl( \{ \bx:\,  \|\bx\|>s\}\bigr) 
\leq \sum_{j=1}^d \nu\bigl( \{ \bx:\, 
|x_j|>s/d\}\bigr)}\\ 
&\le & 
(\nu \circledast \rho) \bigl( \{ \bx:\, \|\bx\|>\theta 
s/d\}\bigr) \sum_{j=1}^d \frac{1}{\rho\bigl( \{ \bx:\, |x_j|\geq 
  \theta\}\bigr) }\\ 
&\leq& \frac{d}{\delta} (\nu \circledast \rho) \bigl( \{ \bx:\, \|\bx\|>\theta s/d\}\bigr)\,. 
\eeao 
By $B_\tau$, we denote  the closed ball of 
radius $\tau>0$ centered at the origin.  
 Then we have as $s\to\infty$, 
\beao 
\mu_s\bigl( B_\tau^c\bigr) \leq \frac{d}{\delta}  \frac{(\nu \circledast \rho) \bigl( \{ \bx:\, \|\bx\|>\theta 
\tau s/d\}\bigr)}{(\nu\circledast \rho) \bigl( \{ \bx:\,  \|\bx\|>s\}\bigr)} 
\to \frac{d}{\delta} \bigl(\frac{\theta\tau}{d}\bigr)^{-\alpha} \mu(B_1)<\infty\,. 
\eeao Hence  
$(\mu_s)$ is relatively {\red compact} (see Proposition 3.16 in 
\cite{resnick:1987}).
{\red Let  
$s_k\to\infty$ be a \seq\ such that $\mu_{s_k}\stv \mu_\ast$ in $\ov   
\bbr^d_0   $ for some limiting point $\mu_\ast$.} Then  $\mu_\ast$ does
not charge the set of infinite points. 
For $\bfa\in \bbr^d$ denote 
$$ 
D_{\bfa} = \Bigl\{ \bfy \in \bbr_0^d: \mu_\ast\Big(\bigl\{ \bz:\, z_j = a_j/y_j \ \ \text{for some  $j=1,\ldots, d$}\bigr\}\Big)=0\Bigr\}. 
$$ 
The argument after (2.22) in 
\cite{jacobsen:mikosch:rosinski:samorodnitsky:2008} shows that there 
are at most countable sets $A_1,\ldots, A_d$  
of real numbers such that  
\begin{equation} \label{e:not.in.A} 
\rho\bigl( D_{\bfa}\bigr) = 0, \quad \bfa \in \prod_{j=1}^d A_j^c.  
\end{equation} 
Consider $\bfa$ \st  
\beam\label{eq:kk} 
a_1>0, \quad  a_j\geq 0,\quad j=2,\ldots, d, \quad a_j\notin 
  A_j, \quad j=1,\ldots, d\,. 
\eeam   
The set $  C_d(\bfa)=\prod_{j=1}^d [a_j, 
\infty)$ is bounded away from the origin and a continuity set for 
the tail measure $\mu$ of $\nu \circledast \rho$. Therefore 
\beam \label{e:decomp} 
\mu( C_d(\bfa))\nonumber &=&  
\lim_{k\to\infty} \frac{(\nu \circledast \rho) \Bigl( s_k 
  C_d(\bfa)\Bigr)}{(\nu\circledast \rho) \bigl( \{ 
  \bx:\,  \|\bx\|>s_k\}\bigr)}\nonumber\\  
&=& \lim_{k\to\infty} \sum_{I_+\subseteq \{1,\ldots, d\}}\int_{D(I_+)} 
f_{k,I_+}(\bfz) 
\, \rho(d\bz)\,, 
\eeam 
where for $I_+\subseteq \{1,\ldots, d\}$, 
$$ 
D(I_+) = \bigl\{ \bz:\, z_j\geq 0 \ \ \text{for $j\in I_+$ and} \ \ 
z_j<0 \ \ \text{for $j\notin I_+$}\bigr\}\,, 
$$ 
interpreting $[0/0,\infty)=\bbr$ and writing  for $k\ge 1$ and
$v$ such that $v_j\geq 0$ for $j\in I_+$, 
\beao 
f_{k,I_+}(v)&=&  \mu_{s_k} \Big(  
\prod_{j\in I_+}\bigl[ a_j/v_j,\infty\bigr)\times \prod_{j\notin 
  I_+}\bigl( -\infty,-a_j/|v_j|\bigr]\Big)\,. 
\eeao 
Choosing $\vep>0$ so small that  
$ 
c:=\rho\bigl(\{ \bz:\, |z_1|\geq \vep\}\bigr)>0\,, 
$ 
and proceeding similarly to the beginning of the proof, we get 
for $I_+\subseteq \{1,\ldots, d\}$ and ${\bfz}={\bf1}=(1,\ldots,1)$, 
\beao 
f_{k,I_+}(\bf1)   
&\leq& \mu_{s_k} \bigl( \{ \by:\, |y_1|>a_1\}\bigr)\nonumber\\ 
&\leq& c^{-1} \frac{(\nu \circledast \rho) \bigl( \{ \bx:\,  |x_1|\geq 
a_1\vep s_k\}\bigr)}{(\nu\circledast \rho) \bigl( \{ \bx:\, 
\|\bx\|>s_k\}\bigr)}\,.
\eeao 
  Therefore, on $D(I_+)\cap \{ 
\bz:\, |z_1|\leq Ms_k\}$, $M>0$,  
\beao 
f_{k,I_+}(\bfz) 
&\leq& c^{-1} \frac{(\nu \circledast \rho) \bigl( \{ \bx:\,  |x_1|\geq 
a_1\vep s_k/z_1\}\bigr)}{(\nu\circledast \rho) \bigl( \{ \bx:\, 
\|\bx\|>s_k\}\bigr)}\\ 
&\leq& C(a_1,\vep,M) \bigl( |z_1|^{\alpha-\delta'}\vee 
|z_1|^{\alpha+\delta'}\bigr)\,.  
\eeao
Here $C(a_1,\vep,M)$ is a finite positive constant, and in the last 
step we used the Potter bounds; see Proposition 0.8 in 
\cite{resnick:1987}.  
Recalling that \eqref{e:not.in.A} holds for our choice of $\bfa$, 
using \eqref{e:small.rho} and the dominated convergence 
theorem, we conclude that for every $M>0$, as $k\to\infty$, 
\beao 
\lefteqn{\int_{D(I_+)\cap \{ \bz:\, |z_1|\leq Ms_k\}} f_{k,I_+}(\bfz) 
\,\rho(d\bz)}\\   
&\to& 
\int_{D(I_+)} \mu_{\ast} \left(  
\prod_{j\in I_+}\bigl[ a_j/z_j,\infty\bigr)\times \prod_{j\notin 
  I_+}\bigl( -\infty,-a_j/|z_j|\bigr]\right)\, \rho(d\bz)  \,. 
\eeao 
Furthermore,  
\beao 
\lefteqn{\int_{D(I_+)\cap \{ \bz:\, |z_1|> Ms_k\}} f_{k,I_+}(\bfz) 
\, \rho(d\bz)}\\   
&\leq& \frac{\rho\bigl( \{ \bz:\, |z_1|>Ms_k\}\bigr)\nu\bigl( \{ \by:\, |y_1|>a_1/M\}\bigr)} 
{(\nu\circledast \rho) \bigl( \{ \bx:\, \|\bx\|>s_k\}\bigr)}\\ 
&& 
+ \frac{\int_{0<|y_1|\leq a_1/M} \rho\bigl( \{ \bz:\, |z_1|>s_k a_1/|y_1|\}\bigr)\, \nu(d\by)}{(\nu\circledast \rho) \bigl( \{ \bx:\,  \|\bx\|>s_k\}\bigr)} \, := A_k+B_k\,. 
\eeao 
Since  
$$ 
\rho\bigl( \{ \bz:\, |z_1|>Ms_k\}\bigr) \leq (Ms_k)^{-(\alpha+\delta)}\int_{\bbr^d} 
|z_1|^{\alpha+\delta}\, \rho(d\bz)\,, 
$$ 
it follows from \eqref{e:small.rho} that $A_k\to 0$ as $k\to\infty$, 
once again for each $M>0$, and by 
\eqref{e:small.tail}, 
$ 
\lim_{M\to\infty}\limsup_{k\to\infty} B_k=0 
$. 
Thus we proved that for any $\bfa$ satisfying \eqref{eq:kk} 
and $I_+\subseteq \{1,\ldots, d\}$, as $k\to\infty$, 
\beao 
\lefteqn{ 
\int_{D(I_+)} f_{k,I_+}(\bfz)\, \rho(d\bz)}\\   
&\to& 
\int_{D(I_+)} \mu_{\ast} \left(  
\prod_{j\in I_+}\bigl[ a_j/z_j,\infty\bigr)\times \prod_{j\notin 
  I_+}\bigl( -\infty,-a_j/|z_j|\bigr]\right)\, \rho(d\bz) \,.  
\eeao 
Then, in view of \eqref{e:decomp}, 
\beam \label{e:quadrant.1} 
&&\mu( C_d(\bfa))\nonumber\\  
&=& \sum_{I_+\subseteq 
  \{1,\ldots, d\}}\int_{D(I_+)} \mu_{\ast} \left(  
\prod_{j\in I_+}\bigl[ a_j/z_j,\infty\bigr)\times \prod_{j\notin 
  I_+}\bigl( -\infty,-a_j/|z_j|\bigr]\right)\, \rho(d\bz)\nonumber  \\ 
&=& (\mu_\ast  \circledast \rho)(C_d(\bfa)) \,. 
\nonumber\\\eeam 
Using the continuity of measures from above, we can now extend 
\eqref{e:quadrant.1} to any $\bfa$ satisfying  
 $a_1>0, \, a_j\geq 0, j=2,\ldots, d$. This 
means that the measures $\mu$ and $\mu_\ast  \circledast \rho$ 
coincide on the set $\{ \bx:\, x_1>0,\, x_j\geq 0, j=2,\ldots, d\}$.  
\par 
Of course, this argument can be repeated while distinguishing any 
coordinate $k=1,\ldots, d$, so that we see that the measures  
$\mu$ and $\mu_\ast \circledast \rho$ 
coincide on each of the $d$ sets $\{ \bx:\, x_k>0,\, x_j\geq 0, 
j=1,\ldots, d\}$, $k=1,\ldots, d$. Since the union of these sets is 
the first quadrant $[0,\infty)^d\backslash \{\bf0\}$ we conclude that 
these two measures coincide on this set.  
An identical argument can be used for all other quadrants of 
$\ov\bbr_0^d$. Thus \eqref{e:right.rel} holds and the 
proof of the theorem is complete.  
\end{proof} 
There is only an apparently small step remaining between 
the conclusion of Theorem \ref{t:subseq.char} and the statement that 
$\nu$ is regularly varying with index $\alpha$. This 
step consists of showing that (with $\rho$ and $\mu$ fixed)  
equation \eqref{e:right.rel} has a unique solution 
$\mu_\ast$. Indeed, if this could be established, then all 
subsequential limits as $s\to\infty$ of the family $(\mu_s)$ in 
\eqref{e:tight.fam} would be equal. In turn, 
$(\mu_s)$ would converge vaguely and $\nu$ would be \regvary . 
 
Unfortunately, this step is not so small and it turns out that, 
in some cases,   
\eqref{e:right.rel}  has multiple 
solutions; {\red see the  following discussion and, in particular,
Remark~\ref{ex:extra}.}
Therefore, 
our next step aims at establishing conditions under 
which the solution to \eqref{e:right.rel} is, indeed, 
unique. We start by reducing the problem to a slightly different 
form. Uniqueness of the solution to \eqref{e:right.rel} 
would follow if the measure $\rho$ had the following property: within 
a relevant class of $\sigma$-finite measures $\nu_1,\nu_2$,  
\begin{equation} \label{e:gen.cancel} 
\text{if} \ \ \nu_1   \circledast \rho = \nu_2  \circledast \rho \ \ 
\text{then} \ \ \nu_1=\nu_2\,. 
\end{equation} 
This property can be viewed as the {\em cancellation property  
of the measure $\rho$ with respect to the operation $\circledast$}.  
 
A similar situation was considered in 
\cite{jacobsen:mikosch:rosinski:samorodnitsky:2008}, in which the 
case $d=1$ was treated. There it was assumed that all measures 
are supported on the positive half-line $(0,\infty)$. In particular, 
all regularly varying measures supported on $(0,\infty)$ 
have tail measures proportional to one another. It is 
natural in this situation to study the cancellation property if one 
of the measures $\nu_1, \nu_2$ is such a canonical 
measure. Correspondingly, one defines a  measure $\nu^\alpha$ on 
$(0,\infty)$, $\alpha\in\bbr$, with a power density given by  
\begin{equation} \label{e:nu.alpha} 
\nu^{\alpha}(dx)= 
\alpha \,x^{-(\alpha+1)}\,dx \,. 
\end{equation} 
Actually, \cite{jacobsen:mikosch:rosinski:samorodnitsky:2008} allow 
at this point for any real value of $\alpha$. In the present paper, we will look 
only at positive $\alpha$, even though the statement of Theorem 
\ref{prop:2} below can be extended to the more general case.  
\par 
The paper \cite{jacobsen:mikosch:rosinski:samorodnitsky:2008} 
addresses the question which measures $\rho$ have the following 
cancellation property:  
$$ 
\nu \circledast \rho = \nu^\alpha \circledast \rho \quad 
        \text{ implies  $\nu=\nu^\alpha$,} 
$$ 
and it was  shown that a measure  
$\rho$ satisfying 
$$ 
\int_0^\infty y^{\alpha-\delta}\vee y^{\alpha+\delta}\, 
\rho(dy)<\infty\,,\quad \mbox{for some $\delta>0$,}  
$$  
has this cancellation property if and only if  
$$ 
\int_{0}^{\infty} y^{\alpha +i\theta} \, \rho(dy) \ne 0  \quad 
\text{for all } \, \theta \in \bbr\,. 
$$ 
 
In order to understand  the more general cancellation 
property \eqref{e:gen.cancel}, we start by replacing the single 
equation by a system of linear equations that include only measures 
concentrated on the positive quadrant of $\bbr^d$.  
\par
For $d\geq 1$, consider the set $Q_d=\{-1,1\}^d$ equipped with the 
coordinate-wise (binary) multiplication. Let $\alpha_1,\ldots, 
\alpha_d$ be positive  numbers, $\bigl( \rho_v, \, v\in Q_d\bigr)$ be 
$\sigma$-finite measures on $(0,\infty)^d$, and  
$\bigl( \nu_v^{(i)}, \, v\in Q_d\bigr), \, i=1,2$, be two collections 
of $\sigma$-finite measures on $[0,\infty)^d$. We assume that for a certain non-empty subset $K$  of $\{ 1,\ldots, d\}$ 
\begin{equation} \label{e:mom.cond.rho} 
\int_{(0,\infty)^d} x_j^{\alpha_j}\, \rho_v(d\bx)<\infty \ \ \text{for 
  each $v\in Q_d$ and $j\in K$,} 
\end{equation} 
 and for $i=1,2$, 
\begin{equation} \label{e:tail.cond.nu} 
\sup_{s>0}\, s^{\alpha_j}\nu_v^{(i)}\bigl( \{\bx:\, x_j>s\}\bigr)<\infty 
\ \ \text{for 
  each $v\in Q_d$ and $j\in K$.} 
\end{equation}  
We now assume that these measures satisfy the following system of 
$2^d$ linear equations. 
\begin{equation} \label{e:main.system} 
\sum_{w\in Q_d} \nu_w^{(1)}\circledast \rho_{vw} 
=\sum_{w\in Q_d} \nu_w^{(2)}\circledast \rho_{vw} 
\ \ \text{for each $v\in Q_d$.} 
\end{equation} 
 
The following result characterizes those measures $\bigl( \rho_v, \, v\in 
Q_d\bigr)$ which can be ``cancelled'' in this system of equations.  
\bth\label{prop:2} 
Let  $\bigl( \rho_{v}, \, {v}\in Q_d\bigr)$  be 
$\sigma$-finite measures on $(0,\infty)^d$  and  
$\bigl( \nu_ {v}^{(i)}, \, {v}\in Q_d\bigr), \, i=1,2$, be  
$\sigma$-finite measures on $[0,\infty)^d$. Assume that for some 
non-empty set 
$K\subseteq \{ 1,\ldots, d\}$, 
\begin{equation} \label{e:set.K} 
\nu_{v}^{(i)}\bigl( \{ \bx:\, x_k=0 \ \ \text{for each $k\in 
  K$}\}\bigr)=0\,,\quad i=1,2,\quad {v}\in Q_d\,, 
\end{equation} 
and that 
\eqref{e:mom.cond.rho} and \eqref{e:tail.cond.nu} hold for this set 
$K$.  
Suppose that for each $j\in K$, $m_1,\ldots, m_d\in \{0,1\}$ and 
$\theta_1,\ldots, \theta_d\in \bbr$, 
\begin{equation} \label{e:cancel.condition} 
\sum_{{v}\in Q_d}\prod_{k=1}^d v_k^{m_k}\int_{(0,\infty)^d} 
x_j^{\alpha_j} \prod_{k=1}^d x_k^{i\theta_k} \rho_v(d\bx)\not= 0 
\end{equation} 
with the usual notation $   v=(v_1,\ldots, v_d)\in Q_d$ and 
$\bx=(x_1,\ldots, x_d)\in [0,\infty)^d$.  
If these measures satisfy 
the system of equations \eqref{e:main.system}, then  
\begin{equation} \label{e:equal.nu} 
\nu_{v}^{(1)}=\nu_{v}^{(2)} \ \ \text{for each $v\in Q_d$.} 
\end{equation} 
\ethe 
\medskip 
 
\begin{remark} \label{rk:K} 
{\rm 
In applications to regular variation the measures $\bigl( 
\nu_v^{(i)}, \, v\in Q_d\bigr), \, i=1,2$, will appear as (restrictions 
to the different quadrants of) certain vague 
limits $\nu$ in $\ov\bbr_0^d$, hence will automatically put no mass at the 
origin. Hence the set $K=\{ 1,\ldots, d\}$ will always satisfy 
\eqref{e:set.K}. This is the maximal possible choice of $K$ which 
requires the largest possible set of conditions in 
\eqref{e:cancel.condition}. {\red The smaller the set $K$ can be
  chosen, 
the fewer  conditions one needs to check.} 
If, for example, $\nu$ is absolutely continuous, then $K=\{1\}$ 
and \eqref{e:set.K} gives $2^d$ conditions.  
} 
\end{remark} 
 
Before proving Theorem \ref{prop:2}, we consider some special cases.  We start by considering the 
scalar case, $d=1$. In this case, the system of equations 
\eqref{e:main.system} becomes 
\begin{equation} \label{e:d.1} 
\nu_1^{(1)}\circledast \rho_{1} + \nu_{-1}^{(1)}\circledast \rho_{-1} 
=\nu_1^{(2)}\circledast \rho_{1} + \nu_{-1}^{(2)}\circledast 
\rho_{-1}\,, 
\end{equation} 
$$ 
\nu_1^{(1)}\circledast \rho_{-1} + \nu_{-1}^{(1)}\circledast \rho_{1} 
=\nu_1^{(2)}\circledast \rho_{-1} + \nu_{-1}^{(2)}\circledast 
\rho_{1}\,. 
$$ 
The only choice is $K=\{ 1\}$ and the conditions 
\eqref{e:cancel.condition} for the cancellation property become 
\beam 
\left\{  \barr{l} 
\int_0^\infty x^{\alpha_1+i\theta}\, \rho_1(dx) 
+\int_0^\infty x^{\alpha_1+i\theta}\, \rho_{-1}(dx)\not= 0\,,\\ 
\int_0^\infty x^{\alpha_1+i\theta}\, \rho_1(dx) 
-\int_0^\infty x^{\alpha_1+i\theta}\, \rho_{-1}(dx)\not= 
0\,,\earr 
\right.\,,\quad  
\theta\in \bbr\,. \label{e:cancel.cond.d1} 
\eeam 
In dimension one the measure $\nu^\alpha$, $\alpha>0$, given in 
\eqref{e:nu.alpha}, is particularly important when studying regular 
variation. Suppose that $\nu_i^{(2)}=c_i \nu^\alpha$, $i=\pm 1$, where 
$c_1\,, c_{-1}$ are nonnegative constants. If we 
choose $\alpha_1= \alpha$, then the assumption 
\eqref{e:tail.cond.nu} automatically holds for the measures 
$\nu_1^{(2)}$ and $\nu_{-1}^{(2)}$. Assuming that the measures 
$\rho_1$, $\rho_{-1}$ satisfy \eqref{e:mom.cond.rho} and  
$ 
\|\rho_{i}\|_\alpha^\alpha= \int_0^\infty x^\alpha\, \rho_i(dx), \ \ 
i=\pm 1\,, 
$ 
the system 
\eqref{e:d.1} takes the form 
\begin{equation} \label{e:d.1.alpha} 
\nu_1^{(1)}\circledast \rho_{i} + \nu_{-1}^{(1)}\circledast \rho_{-i} 
= \bigl( c_1\|\rho_{i}\|_\alpha^\alpha +c_{-1} 
\|\rho_{-i}\|_\alpha^\alpha\bigr)\;\nu^\alpha, \ \ i=\pm 1\,. 
\end{equation} 
Notice that the two equations \eqref{e:d.1.alpha} already imply that  
\eqref{e:tail.cond.nu}  holds for the measures 
$\nu_1^{(1)}$ and $\nu_{-1}^{(1)}$ as well. We therefore obtain the 
following corollary of Theorem \ref{prop:2}.  
\begin{corollary} \label{cor:two.sided} 
Let $\alpha_1=\alpha>0$, and $\rho_1$, $\rho_{-1}$ be $\sigma$-finite 
measures on $(0,\infty)$ satisfying \eqref{e:mom.cond.rho}. If the 
$\sigma$-finite measures on $[0,\infty)$, $\nu_1^{(1)}$ and 
$\nu_{-1}^{(1)}$, satisfy the system of equations \eqref{e:d.1.alpha}, 
and if the cancellation conditions \eqref{e:cancel.cond.d1} are 
satisfied, then $\nu_{i}^{(1)}=c_i\nu^\alpha$, $i=\pm 1$.  
\end{corollary}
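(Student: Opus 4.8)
The plan is to obtain this as a direct specialization of Theorem~\ref{prop:2} to $d=1$, $\alpha_1=\alpha$, with the second family taken to be $\nu_v^{(2)}=c_v\nu^\alpha$ for $v\in Q_1=\{-1,1\}$, and with the only admissible index set $K=\{1\}$. The first thing I would record is the self-similarity identity $\nu^\alpha\circledast\rho=\|\rho\|_\alpha^\alpha\,\nu^\alpha$, valid for every $\sigma$-finite $\rho$ on $(0,\infty)$ with $\|\rho\|_\alpha^\alpha<\infty$; it is checked at once by testing on a half-line $(s,\infty)$, where $(\nu^\alpha\circledast\rho)((s,\infty))=\int_0^\infty(s/x)^{-\alpha}\,\rho(dx)=s^{-\alpha}\|\rho\|_\alpha^\alpha$. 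With this identity, the right-hand side of the equation indexed by $i$ in \eqref{e:d.1.alpha} equals $\nu_1^{(2)}\circledast\rho_i+\nu_{-1}^{(2)}\circledast\rho_{-i}$, so that \eqref{e:d.1.alpha} is precisely the system \eqref{e:d.1}, i.e. \eqref{e:main.system} for $d=1$, with the chosen $\nu_v^{(2)}$. Likewise, for $d=1$ and $K=\{1\}$ the conditions \eqref{e:cancel.condition} taken at $m=0$ and $m=1$ are exactly the two assumed conditions \eqref{e:cancel.cond.d1}. Hence, once the structural hypotheses of Theorem~\ref{prop:2} are in place, the theorem yields $\nu_v^{(1)}=\nu_v^{(2)}=c_v\nu^\alpha$, which is the claim.

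It then remains to verify \eqref{e:mom.cond.rho}, \eqref{e:tail.cond.nu} and \eqref{e:set.K}. The moment condition \eqref{e:mom.cond.rho} on $\rho_{\pm1}$ is assumed. For the second family, $\nu_v^{(2)}=c_v\nu^\alpha$ satisfies $s^\alpha\nu^\alpha((s,\infty))=1$, so \eqref{e:tail.cond.nu} holds, and $\nu^\alpha$ puts no mass at the origin, so \eqref{e:set.K} is trivial. The substance lies in deriving these same two properties for the unknown measures $\nu_1^{(1)},\nu_{-1}^{(1)}$ from \eqref{e:d.1.alpha} itself. I would first note that evaluating \eqref{e:cancel.cond.d1} at $\theta=0$ gives $\|\rho_1\|_\alpha^\alpha+\|\rho_{-1}\|_\alpha^\alpha\neq0$ and $\|\rho_1\|_\alpha^\alpha\neq\|\rho_{-1}\|_\alpha^\alpha$; since $\rho_v$ is supported on $(0,\infty)$, the first of these says that $\rho_1$ and $\rho_{-1}$ are not both the zero measure.

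For the tail bound I would exploit that all measures in \eqref{e:d.1.alpha} are non-negative. Writing $C_i=c_1\|\rho_i\|_\alpha^\alpha+c_{-1}\|\rho_{-i}\|_\alpha^\alpha$, the summand $\nu_1^{(1)}\circledast\rho_i$ in the $i$-th equation is dominated by $C_i\nu^\alpha$. Choosing $i$ with $\rho_i\neq0$ (possible by the previous paragraph) and then $b>0$ with $\rho_i([b,\infty))>0$, and arguing as at the start of the proof of Theorem~\ref{t:subseq.char}, one has
\[
(\nu_1^{(1)}\circledast\rho_i)((s,\infty))\;\geq\;\rho_i([b,\infty))\,\nu_1^{(1)}((s/b,\infty)),
\]
which combined with $(\nu_1^{(1)}\circledast\rho_i)((s,\infty))\leq C_i\,s^{-\alpha}$ gives $t^\alpha\nu_1^{(1)}((t,\infty))\leq\rho_i([b,\infty))^{-1}C_i\,b^{-\alpha}$, i.e. \eqref{e:tail.cond.nu}; the same argument applies to $\nu_{-1}^{(1)}$. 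For the atom condition I would evaluate \eqref{e:d.1.alpha} on $\{0\}$: since $\nu^\alpha(\{0\})=0$ and $(\nu\circledast\rho)(\{0\})=\nu(\{0\})\,\rho((0,\infty))$ for $\rho$ on $(0,\infty)$, the $i$-th equation reduces to $\nu_1^{(1)}(\{0\})\rho_i((0,\infty))+\nu_{-1}^{(1)}(\{0\})\rho_{-i}((0,\infty))=0$. Non-negativity forces both summands to vanish; running $i=\pm1$ and using that $\rho_1,\rho_{-1}$ are not both zero yields $\nu_1^{(1)}(\{0\})=\nu_{-1}^{(1)}(\{0\})=0$, which is \eqref{e:set.K} (note each $\nu_v^{(1)}(\{0\})$ is finite by $\sigma$-finiteness). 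With every hypothesis verified, Theorem~\ref{prop:2} applies.

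The anticipated main obstacle is exactly this bootstrapping of \eqref{e:tail.cond.nu} and \eqref{e:set.K} for the a priori unknown $\nu_{\pm1}^{(1)}$: these are not assumed and must be extracted from the system, the argument resting on positivity of all the measures together with the non-degeneracy $\|\rho_1\|_\alpha^\alpha+\|\rho_{-1}\|_\alpha^\alpha\neq0$ read off from \eqref{e:cancel.cond.d1} at $\theta=0$. The remaining ingredients—the self-similarity of $\nu^\alpha$ and the matching of \eqref{e:cancel.condition} with \eqref{e:cancel.cond.d1}—are routine bookkeeping.
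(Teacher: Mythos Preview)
Your proposal is correct and follows exactly the paper's approach: specialize Theorem~\ref{prop:2} to $d=1$, $K=\{1\}$, $\nu_v^{(2)}=c_v\nu^\alpha$, after recognizing \eqref{e:d.1.alpha} as the system \eqref{e:main.system} via the identity $\nu^\alpha\circledast\rho=\|\rho\|_\alpha^\alpha\,\nu^\alpha$. You are in fact more thorough than the paper, which merely asserts in one line that \eqref{e:d.1.alpha} implies \eqref{e:tail.cond.nu} for $\nu_{\pm1}^{(1)}$ and does not explicitly address \eqref{e:set.K}; your verifications of both points are sound.
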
 
\bre\label{ex:extra}{\red \em
Assume that all conditions of Corollary~\ref{cor:two.sided} 
but \eqref{e:cancel.cond.d1}
are
satisfied. For example, if the first condition in
\eqref{e:cancel.cond.d1}
is not satisfied for some $\theta=\theta_0\in\bbr$, then the \ms s 
\beao
\nu_i^{(1)}(dx)= [c_i+ a\,\cos(\theta_0\log x) + b \sin(\theta_0\log
x)]\,\nu^\alpha(dx)\,,\quad i=\pm 1\,,
\eeao
for $a,b$ \st\ $0\le a^2+b^2\le 1$ solve the system of equations
 \eqref{e:d.1.alpha}. Similarly, if the second condition in \eqref{e:cancel.cond.d1}
fails for some $\theta=\theta_0\in\bbr$, then the \ms s
\beao
\nu_i^{(1)}(dx)= \big[c_i+ (-1)^i\big(a\,\cos(\theta_0\log x) + b\, \sin(\theta_0\log
x)\big)\big]\,\nu^\alpha(dx)\,,\quad i=\pm 1\,,
\eeao
with the same choice of $a,b$ as above satisfy \eqref{e:d.1.alpha}.
}
\ere
Another useful special case of Theorem \ref{prop:2} corresponds to the 
situation, where only one of the measures $\bigl( \rho_v, \, v\in 
Q_d\bigr)$ is non-null;  
as we will see in the sequel this case  
naturally arises in inverse problems for regular variation.  
 We assume without loss of generality that the 
non-null measure corresponds to the unity in $Q_d$, $v=(1,\ldots, 
1)$. For simplicity denoting this measure by $\rho$, 
we see that the system of equations \eqref{e:main.system} decouples, 
and becomes 
$$ 
\nu_v^{(1)}\circledast \rho 
=  \nu_v^{(2)}\circledast \rho \ \ \text{for each $v\in Q_d$.} 
$$ 
However, the decoupled system of equations does not provide us 
with any additional insight over a single equation, so the right thing to 
do is to drop the subscript and consider the equation  
\begin{equation} \label{e:pos.case.eqn} 
\nu^{(1)}\circledast \rho 
=  \nu^{(2)}\circledast \rho 
\end{equation} 
for two $\sigma$-finite measures $\nu^{(1)}$ and $\nu^{(2)}$.  
If we interpret \eqref{e:mom.cond.rho}, \eqref{e:tail.cond.nu} and 
\eqref{e:set.K} by disregarding the subscripts, we obtain another  
corollary of Theorem \ref{prop:2}.  
\begin{corollary} \label{cor:positive} 
Let $\alpha_1,\ldots, \alpha_d$ be positive numbers, $\rho$ a 
$\sigma$-finite measure on $(0,\infty)^d$ and  $\nu^{(1)}$, 
$\nu^{(2)}$ $\sigma$-finite measures on $[0,\infty)^d$. Suppose 
that the non-empty set 
$K\subseteq \{ 1,\ldots, d\}$ satisfies \eqref{e:set.K} and  
\eqref{e:mom.cond.rho} and \eqref{e:tail.cond.nu} hold.  
 
If the equation \eqref{e:pos.case.eqn} is fulfilled, and  
\begin{equation} \label{e:cancel.condition.pos} 
\int_{(0,\infty)^d} 
x_j^{\alpha_j} \prod_{k=1}^d x_k^{i\theta_k} \rho(d\bx)\not= 0 
\end{equation}  
for each $j\in K$,  and 
$\theta_1,\ldots, \theta_d\in \bbr$, then $\nu^{(1)}=\nu^{(2)}$.  
\end{corollary}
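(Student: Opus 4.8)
\medskip
\noindent\emph{Proof strategy.}
The plan is first to observe that this is the prototype ``one-sided'' case of Theorem~\ref{prop:2}: taking $\rho_{(1,\ldots,1)}=\rho$ and letting all other $\rho_v$, together with all $\nu_v^{(i)}$ for $v\neq(1,\ldots,1)$, be the zero measure, the system \eqref{e:main.system} decouples, its only nontrivial equation being \eqref{e:pos.case.eqn}, and since $\prod_k v_k^{m_k}=1$ at $v=(1,\ldots,1)$ the conditions \eqref{e:cancel.condition} reduce exactly to \eqref{e:cancel.condition.pos}; so the shortest route is simply to invoke Theorem~\ref{prop:2}. Since this case already carries the whole idea, let me instead indicate how one would prove it directly.

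Set $\lambda=\nu^{(1)}-\nu^{(2)}$, a $\sigma$-finite signed measure on $[0,\infty)^d$ which, by \eqref{e:tail.cond.nu}, satisfies $|\lambda|(\{\bx:\,x_j>s\})\le C_j s^{-\alpha_j}$ for $j\in K$, and, by \eqref{e:set.K}, puts no mass on $\bigcap_{k\in K}\{x_k=0\}$; the goal is to deduce from $\lambda\circledast\rho=0$ that $\lambda=0$. Fix $j\in K$. Because the $\alpha_j$-th power of the $j$-th coordinate factors under $\circledast$ (it is multiplicative along $(\by,\bz)\mapsto$ the $j$-th coordinate of $T_\by\bz$), the relation $\lambda\circledast\rho=0$ yields $(y_j^{\alpha_j}\lambda)\circledast(z_j^{\alpha_j}\rho)=0$. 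Decomposing $\lambda$ according to which of the coordinates $k\neq j$ vanish, and transporting each piece to a Euclidean space by the coordinatewise logarithm, one turns $\circledast$ into ordinary convolution and multiplication of arguments by powers of the coordinates into multiplication by exponentials; the image of $z_j^{\alpha_j}\rho$ is then a finite measure $Q_j$ (finite by \eqref{e:mom.cond.rho}) with Fourier transform $\widehat Q_j(\theta)=\int_{(0,\infty)^d} x_j^{\alpha_j}\prod_k x_k^{i\theta_k}\,\rho(d\bx)$, which is nowhere zero by \eqref{e:cancel.condition.pos}, while the image of $y_j^{\alpha_j}\lambda$ is a signed measure $R_j$ with $R_j*Q_j=0$.

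From $R_j*Q_j=0$ with $\widehat Q_j$ continuous and never vanishing one wants to conclude $R_j=0$ --- a Fourier-uniqueness statement in the spirit of Wiener's Tauberian theorem --- and this finishes the proof, since $R_j=0$ says that $\lambda$ vanishes on $\{x_j>0\}$, so that letting $j$ range over $K$ and using \eqref{e:set.K} gives $\lambda=0$, i.e.\ $\nu^{(1)}=\nu^{(2)}$. The part that requires real work is that $R_j$ need not be a finite, or even a tempered, measure: the tail bound in \eqref{e:tail.cond.nu} makes $\int_{x_j>1}x_j^{\alpha_j}|\lambda|(d\bx)$ only logarithmically convergent, nothing is assumed about $|\lambda|$ near the origin, and the coordinates outside $K$ carry no control at all. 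I would deal with this by a two-sided regularisation --- replacing the weight $x_j^{\alpha_j}$ by a slightly sub-critical one together with a truncation of $\lambda$, and localising the coordinates $k\neq j$ to compact sets --- then carrying out the Fourier-uniqueness step for the resulting genuinely finite signed measures, and finally removing the regularisation by a limiting argument in which the error terms are dominated using \eqref{e:mom.cond.rho}, \eqref{e:tail.cond.nu} and Potter-type bounds, exactly as in the proof of Theorem~\ref{t:subseq.char}. This regularisation-and-limit step is the main obstacle and the technically heaviest part of the argument.
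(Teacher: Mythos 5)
Your first paragraph is exactly the paper's proof: the corollary is obtained by applying Theorem~\ref{prop:2} with $\rho_{(1,\ldots,1)}=\rho$ and all other $\rho_v$ (and the corresponding $\nu_v^{(i)}$) taken to be null, under which the system \eqref{e:main.system} decouples to \eqref{e:pos.case.eqn} and \eqref{e:cancel.condition} reduces to \eqref{e:cancel.condition.pos}; this is complete and correct. The additional ``direct'' sketch is not needed and is the riskier route: forming $\lambda=\nu^{(1)}-\nu^{(2)}$ is delicate because the two $\sigma$-finite measures may both be infinite outside the regions controlled by \eqref{e:tail.cond.nu} (in particular near the origin and in the coordinates outside $K$), which is precisely why the paper's proof of Theorem~\ref{prop:2} differences the bounded functions $h_j^{(v,i)}$ rather than the measures themselves.
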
 
 
In the case $d=1$, the conclusion of Corollary \ref{cor:positive} is 
the same as the direct part of Theorem 2.1 in 
\cite{jacobsen:mikosch:rosinski:samorodnitsky:2008}.

\begin{proof}[Proof of Theorem \ref{prop:2}] 
The general idea of the proof is similar to the proof of Theorem 2.1 
in \cite{jacobsen:mikosch:rosinski:samorodnitsky:2008}. Fix $j\in K$ 
and define 
$$ 
h_j^{(v,i)}(\by) = y_j^{\alpha_j}\nu_v^{(i)}\bigl( \{ \bz:\, 0\leq 
z_k\leq y_k, \, k\not= j, \, z_j>y_j\}\bigr), \ \ v\in Q_d, \, i=1,2 
$$ 
for $\by=(y_1,\ldots, y_d)$ with all $y_k>0$. It 
follows from \eqref{e:tail.cond.nu} that all these functions  
are bounded on their domain. The equations \eqref{e:main.system} then 
tell us that  
\beao 
\lefteqn{\sum_{w\in Q_d} \int_{[0,\infty)^d} h_j^{(w,1)}\bigl( 
x_1/z_1,\ldots, x_d/z_d\bigr)\, \rho_{vw}(d\bz)}\\ 
&=&\sum_{w\in Q_d} \int_{[0,\infty)^d} h_j^{(w,2)}\bigl( 
x_1/z_1,\ldots, x_d/z_d\bigr)\, \rho_{vw}(d\bz) 
\eeao 
for each $v\in Q_d, \, x_k>0, \, k=1,\ldots, d$. Next, we define 
functions  
$$ 
g_j^{(v,i)}(\by) = h_j^{(v,i)}\bigl( e^{y_1},\ldots, e^{y_d}\bigr), \ \ v\in Q_d, \, i=1,2, 
$$ 
for  $\by\in \bbr^d$, and finite measures on 
$\bbr^d$ by  
$$ 
\mu_j^{(v)}(d\bx) = \big(e^{\alpha_jx_j}\rho_v\big)\circ T_{\log}^{-1}(d\bx)\,, 
$$ 
where $T_{\log}(\by)=(\log y_1, \ldots, \log y_d)$, $\by\in 
(0,\infty)^d$. We can now write 
$$ 
\sum_{w\in Q_d} \int_{\bbr^d} g_j^{(w,1)}(\bz-\by)\, \mu_j^{(vw)}(d\by)  
= \sum_{w\in Q_d} \int_{\bbr^d} g_j^{(w,2)}(\bz-\by)\, \mu_j^{(vw)}(d\by)  
$$ 
for each $v\in Q_d, \, \bz\in\bbr^d$. Therefore, the bounded functions 
$$ 
g_j^{(v)}(\by) = g_j^{(v,1)}(\by) - g_j^{(v,2)}(\by), \ \by\in\bbr^d, 
\ \ v\in Q_d, 
$$ 
satisfy  
\begin{equation} \label{conv.eq.v} 
\sum_{w\in Q_d} \int_{\bbr^d} g_j^{(w)}(\bz-\by)\, \mu_j^{(vw)}(d\by) 
=0 
\end{equation} 
for each $v\in Q_d, \, \bz\in\bbr^d$. 
 
For fixed  $m_k\in \{0,1\}, \, k=1,\ldots, d$, and $j\in K$, we define a signed bounded measure 
on $\bbr^d$ by  
$$ 
\mu_j = \sum_{v\in Q_d} \prod_{k=1}^d v_k^{m_k}\mu_j^{(v)} 
$$ 
and a bounded function on $\bbr^d$ by  
$$ 
g_j= \sum_{v\in Q_d} \prod_{k=1}^d v_k^{m_k}g_j^{(v)}\,. 
$$ 
Then the system of equations \eqref{conv.eq.v} implies 
\begin{equation} \label{conv.eq} 
\int_{\bbr^d} g_j(\bz-\by)\, \mu_j(d\by) 
=0\,,\quad \bz\in\bbr^d\,, 
\end{equation} 
and we want to show that $g_j=0$ everywhere. 
 
Notice now that the right-hand side of \eqref{e:cancel.condition} is exactly the Fourier transform of $\mu_j$ at the point $\bfs = (\theta_1,\ldots,\theta_d)$.  
Let $\varphi$ be the standard normal density in 
$\bbr^d$. Then, in the standard notation for the additive 
convolution, we have $\varphi \ast \mu_j \in L^1(\bbr^d)$, and the 
equation \eqref{conv.eq} tells us that $g_j\ast \bigl( \varphi \ast 
\mu_j\bigr)\equiv 0$. Let the symbol \ $\wh\ $\ \  denote the distributional Fourier transform of 
a function or a signed measure.  
By Theorem 9.3 in \cite{rudin:1973} we have 
that, in the distributional sense,  
$$ 
{\rm supp}(\wh g_j)\subseteq \{\bfs\in \bbr^d: \wh 
\varphi(\bfs) \wh \mu_j(\bfs)=0\}= \{\bfs\in \bbr^d: \wh \mu_j(\bfs)=0\} = \emptyset\,, 
$$ 
where the last equation is just the condition \eqref{e:cancel.condition}.  
Therefore, we conclude 
that the support of the Fourier transform $\wh g_j$ is empty, hence 
$g_j=0$ almost everywhere. Since the function $g_j$ is coordinate-wise 
right-continuous, we see that $g_j=0$ everywhere. 
 
The $2^d\times 2^d$ matrix $A$ with the entries  
$$ 
a_{m_1,\ldots,m_d, v_1,\ldots, v_d} = \prod_{k=1}^d v_k^{m_k}, \ \ 
m_j\in\{0,1\}, \, v_j\in \{-1,1\}, \ \ j=1,\ldots,d\,, 
$$ 
is non-degenerate (in fact, $|{\rm det} \, A| = 
2^{d2^{d-1}}$). Therefore, it follows from the definition of the 
function $g_j$ that for each $v\in Q_d$, $g_j^{(v)}\equiv 0$, hence  
$g_j^{(v,1)}\equiv g_j^{(v,2)}$. We conclude that  
\beao 
\lefteqn{\nu_v^{(1)}\bigl( \{ \bz:\, 0\leq 
z_k\leq y_k, \, k\not= j, \, z_j>y_j\}\bigr)}\\ 
&=& \nu_v^{(2)}\bigl( \{ \bz:\, 0\leq 
z_k\leq y_k, \, k\not= j, \, z_j>y_j\}\bigr)  
\eeao 
for each $v\in Q_d$, $\by\in (0,\infty)^d$ and $j\in K$. This means 
that, for each $v\in Q_d$,  the 
measures $\nu_v^{(1)}$ and $\nu_v^{(2)}$ coincide on the set $\{ 
y_j>0\}$ for each $j\in K$. By the definition of the set $K$ we obtain 
\eqref{e:equal.nu} and, hence, complete the proof.  
\end{proof} 
 
The conditions for the cancellation property in \eqref{e:cancel.condition} 
and its special cases above, are somewhat implicit.  
 On the other hand, 
in the case of one dimension and a single equation, the presence 
of a sufficiently large atom in the measure $\rho$  
already guarantees the cancellation property; see Corollary 2.2 in 
\cite{jacobsen:mikosch:rosinski:samorodnitsky:2008}. A similar 
phenomenon, described in the following statement, occurs in general.  
 
\begin{corollary} \label{c:atoms.cancel} 
Let  $\bigl( \rho_v, \, v\in Q_d\bigr)$  be 
$\sigma$-finite measures on $(0,\infty)^d$,  and let  
$\bigl( \nu_v^{(i)}, \, v\in Q_d\bigr), \, i=1,2$, be  
$\sigma$-finite measures on $[0,\infty)^d$. Suppose that $K$ is a 
nonempty set satisfying \eqref{e:set.K}. Assume, further, that  
\eqref{e:mom.cond.rho} and \eqref{e:tail.cond.nu} hold for this set 
$K$.   
 
Suppose that these measures satisfy 
the system of equations \eqref{e:main.system}. If for every $j\in K$ 
there is $v^{(j)}\in Q_d$ and an atom 
$\bx^{(j)}=\bigl(x^{(j)}_1,\ldots,  x^{(j)}_d\bigr)$ of $\rho_{v^{(j)}}$ 
with mass $w^{(j)}$ so large that 
$$ 
w^{(j)} \bigl( x^{(j)}_j\bigr)^{\alpha_j} > \int_{\bx\not= \bx^{(j)}} 
x_j^{\alpha_j}\, \rho_{v^{(j)}}(d\bx) + \sum_{v\not=v^{(j)}}  
\int_{(0,\infty)^d} x_j^{\alpha_j}\, \rho_{v}(d\bx) \,, 
$$ 
then the conclusion \eqref{e:equal.nu} holds.  
\end{corollary}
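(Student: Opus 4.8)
The plan is to deduce the statement from Theorem~\ref{prop:2} by showing that the hypothesis about a large atom forces the cancellation condition \eqref{e:cancel.condition} to hold for every $j\in K$. So fix $j\in K$, pick $m_1,\ldots,m_d\in\{0,1\}$ and $\theta_1,\ldots,\theta_d\in\bbr$, and consider the complex number
\[
S=\sum_{v\in Q_d}\prod_{k=1}^d v_k^{m_k}\int_{(0,\infty)^d}x_j^{\alpha_j}\prod_{k=1}^d x_k^{i\theta_k}\,\rho_v(d\bx).
\]
Note first that, by \eqref{e:mom.cond.rho}, each integral here is absolutely convergent: on $(0,\infty)^d$ one has $|\prod_k x_k^{i\theta_k}|=1$, and $\int x_j^{\alpha_j}\rho_v(d\bx)<\infty$ for every $v\in Q_d$.

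First I would isolate the contribution of the distinguished atom. Writing $\rho_{v^{(j)}}=w^{(j)}\delta_{\bx^{(j)}}+\rho'$, where $\rho'$ is the restriction of $\rho_{v^{(j)}}$ to $\{\bx\neq\bx^{(j)}\}$, we get $S=T_1+T_2$, where
\[
T_1=\prod_{k=1}^d\bigl(v^{(j)}_k\bigr)^{m_k}\,w^{(j)}\bigl(x^{(j)}_j\bigr)^{\alpha_j}\prod_{k=1}^d\bigl(x^{(j)}_k\bigr)^{i\theta_k}
\]
is the atom's term and $T_2$ collects everything else: the integral of $x_j^{\alpha_j}\prod_k x_k^{i\theta_k}$ against $\rho'$ (carrying the factor $\prod_k(v^{(j)}_k)^{m_k}$) together with the sum over $v\neq v^{(j)}$ of the corresponding integrals against $\rho_v$. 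Since $v^{(j)}\in Q_d=\{-1,1\}^d$ and $\bx^{(j)}\in(0,\infty)^d$, every factor $\bigl(v^{(j)}_k\bigr)^{m_k}$ and $\bigl(x^{(j)}_k\bigr)^{i\theta_k}$ has modulus one, so $|T_1|=w^{(j)}\bigl(x^{(j)}_j\bigr)^{\alpha_j}$. Estimating each integrand in $T_2$ in modulus by $x_j^{\alpha_j}$ gives
\[
|T_2|\le\int_{\bx\neq\bx^{(j)}}x_j^{\alpha_j}\,\rho_{v^{(j)}}(d\bx)+\sum_{v\neq v^{(j)}}\int_{(0,\infty)^d}x_j^{\alpha_j}\,\rho_v(d\bx).
\]

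By the hypothesis of the corollary, $|T_1|$ strictly exceeds this bound, hence $|S|\ge|T_1|-|T_2|>0$ by the reverse triangle inequality; in particular $S\neq 0$. Since $j\in K$ and the exponents $m_k$ and frequencies $\theta_k$ were arbitrary, this is exactly condition \eqref{e:cancel.condition}. As the measures $(\rho_v)$ and $(\nu_v^{(i)})$ are assumed to satisfy \eqref{e:set.K}, \eqref{e:mom.cond.rho}, \eqref{e:tail.cond.nu} and the system \eqref{e:main.system}, Theorem~\ref{prop:2} applies and yields $\nu_v^{(1)}=\nu_v^{(2)}$ for every $v\in Q_d$, which is \eqref{e:equal.nu}.

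There is no real obstacle here: the argument is a one-line application of the reverse triangle inequality once the accounting is set up. The only points requiring a little care are that the distinguished atom lies in the \emph{open} positive orthant (so that its sign factors $\bigl(v^{(j)}_k\bigr)^{m_k}$ and its complex-exponential factors $\bigl(x^{(j)}_k\bigr)^{i\theta_k}$ both have modulus one), that \eqref{e:mom.cond.rho} makes all the integrals absolutely convergent so the splitting and the bound on $T_2$ are legitimate, and that the distinguished measure $\rho_{v^{(j)}}$ may itself have a non-atomic part, which is why one writes $\rho_{v^{(j)}}=w^{(j)}\delta_{\bx^{(j)}}+\rho'$ rather than treating $\rho_{v^{(j)}}$ as a pure atom.
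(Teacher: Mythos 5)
Your proof is correct and follows essentially the same route as the paper: both isolate the distinguished atom, note that its term has modulus $w^{(j)}\bigl(x^{(j)}_j\bigr)^{\alpha_j}$, bound the remaining contributions by $\int_{\bx\neq\bx^{(j)}}x_j^{\alpha_j}\,\rho_{v^{(j)}}(d\bx)+\sum_{v\neq v^{(j)}}\int x_j^{\alpha_j}\,\rho_v(d\bx)$, and conclude via the reverse triangle inequality that \eqref{e:cancel.condition} holds, after which Theorem~\ref{prop:2} gives \eqref{e:equal.nu}. Your write-up is merely more explicit about the decomposition $\rho_{v^{(j)}}=w^{(j)}\delta_{\bx^{(j)}}+\rho'$ and the unimodularity of the sign and phase factors, which the paper leaves implicit.
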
 
\begin{proof} 
An application of the triangle inequality shows that the 
assumptions of the corollary, in fact, imply 
\eqref{e:cancel.condition}. Indeed, let $j\in K$. We have, for any 
$m_1,\ldots, m_d\in \{0,1\}$ and $\theta_1,\ldots, \theta_d\in \bbr$,   
\beao 
\lefteqn{\left| \sum_{v\in Q_d}\prod_{k=1}^d v_k^{m_k}\int_{(0,\infty)^d} 
x_j^{\alpha_j} \prod_{k=1}^d x_k^{i\theta_k} \rho_v(d\bx) \right|}\\ 
&\geq& w^{(j)} \bigl( x^{(j)}_j\bigr)^{\alpha_j} - \int_{\bx\not= \bx^{(j)}} 
x_j^{\alpha_j}\, \rho_{v^{(j)}}(d\bx) - \sum_{v\not=v^{(j)}}  
\int_{(0,\infty)^d} x_j^{\alpha_j}\, \rho_{v}(d\bx) >0 
\eeao 
by the assumption, so none of the expressions in 
\eqref{e:cancel.condition} can vanish.  
\end{proof} 
 
We now put together Theorems \ref{t:subseq.char} and \ref{prop:2} and 
obtain an inverse regular variation result  
for multiplicative convolutions. It is a multivariate 
extension of Theorem 2.3 in 
\cite{jacobsen:mikosch:rosinski:samorodnitsky:2008}.  
\begin{theorem} \label{t:inverse.general} 
Let $\alpha>0$ and $\rho$, $\nu$ be $\sigma$-finite 
measures on $\bbr^d$ \st\  
$$ 
\rho\bigl( \{ \bx:\, x_i=0 \}\bigr)=0\quad\mbox{for every $i=1,\ldots, 
  d$,} 
$$ 
and $(\nu \circledast \rho)\in\RV(\alpha,\mu)$. 
Assume \eqref{e:small.rho}, \eqref{e:small.tail} and  
\begin{equation} \label{e:regvar.cond} 
\int_{\bbr^d} |x_j|^{\alpha} \prod_{k=1}^d |x_k|^{i\theta_k} 
\prod_{k=1}^d \bigl( {\rm sign}(x_k)\bigr)^{m_k}\, \rho(d\bx)\not= 0 
\end{equation} 
for each $j=1,\ldots, d$, $m_1,\ldots, m_d\in \{0,1\}$ and 
$\theta_1,\ldots, \theta_d\in \bbr$. Then the measure $\nu$ is 
regularly varying with index  $\alpha$. Moreover, the measures 
$(\mu_s)$ in \eqref{e:tight.fam} converge vaguely as $s\to\infty$, in  
$\ov\bbr_0^d$, to a measure $\mu_\ast$ satisfying 
\eqref{e:right.rel}.  
\end{theorem}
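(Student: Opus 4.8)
The plan is to combine Theorem~\ref{t:subseq.char} and Theorem~\ref{prop:2} in the obvious way: the first result provides subsequential limits $\mu_\ast$ of the family $(\mu_s)$ that satisfy the fixed-point equation $\mu_\ast\circledast\rho=\mu$, and the second (via Corollary~\ref{cor:positive}) guarantees that such a $\mu_\ast$ is unique, so the full family must converge and $\nu$ is regularly varying. First I would check that all hypotheses of Theorem~\ref{t:subseq.char} are in force: the assumption $\rho(\{x:x_i=0\})=0$ for every $i$ trivially implies the nondegeneracy condition \eqref{e:rho.nond}, and \eqref{e:small.rho} together with \eqref{e:small.tail} are assumed outright. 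Hence $(\mu_s)$ is vaguely relatively compact in $\ov\bbr_0^d$, every subsequential limit $\mu_\ast$ puts no mass at the infinite points, and $\mu_\ast\circledast\rho=\mu$.

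Next I would set up the uniqueness argument. Suppose $\mu_\ast^{(1)}$ and $\mu_\ast^{(2)}$ are two subsequential limits; both satisfy $\mu_\ast^{(i)}\circledast\rho=\mu$, so $\mu_\ast^{(1)}\circledast\rho=\mu_\ast^{(2)}\circledast\rho$. To apply Corollary~\ref{cor:positive} I must pass from measures on $\ov\bbr_0^d$ (supported on all quadrants, but charging no coordinate hyperplane in a way that matters, and with no atom at $\infty$) to measures on $[0,\infty)^d$ and from the signed measure $\rho$ on $\bbr^d$ to positive measures on $(0,\infty)^d$. The device is the quadrant decomposition: for each sign pattern $v\in Q_d$ let $\rho_v$ be the image of $\rho$ restricted to the quadrant $\{x:\ \mathrm{sign}(x_k)=v_k\}$ under the coordinatewise absolute value map, a $\sigma$-finite measure on $(0,\infty)^d$; similarly decompose $\mu_\ast^{(i)}$ into $\nu_v^{(i)}$ on $[0,\infty)^d$. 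A direct computation with the definition of $\circledast$ and $T_\bx=\mathrm{diag}(\bx)$ shows that the single equation $\mu_\ast^{(1)}\circledast\rho=\mu_\ast^{(2)}\circledast\rho$, read quadrant by quadrant on the target space, is exactly the system \eqref{e:main.system} for these $(\rho_v),(\nu_v^{(i)})$. Then \eqref{e:mom.cond.rho} follows from \eqref{e:small.rho} with $\alpha_j=\alpha$, \eqref{e:tail.cond.nu} follows from the bound $\mu_s(B_\tau^c)\le(d/\delta)(\theta\tau/d)^{-\alpha}\mu(B_1)+o(1)$ established in the proof of Theorem~\ref{t:subseq.char} (which gives uniform-in-$s$ tail control, hence the same for the limit), and the set $K=\{1,\ldots,d\}$ works in \eqref{e:set.K} because $\mu_\ast^{(i)}$ charges no infinite point and, being a tail limit, assigns no mass to $\{0\}$. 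Finally, condition \eqref{e:regvar.cond} is precisely the cancellation condition \eqref{e:cancel.condition}, since $\int_{(0,\infty)^d}x_j^{\alpha}\prod_k x_k^{i\theta_k}\rho_v(d\bx)$ is the absolute-value-image of $\int$ over the $v$-quadrant, and summing against $\prod_k v_k^{m_k}$ reproduces the $\prod_k(\mathrm{sign}(x_k))^{m_k}$ weight. Corollary~\ref{cor:positive} (in the form that handles the full system, i.e.\ Theorem~\ref{prop:2}) then yields $\nu_v^{(1)}=\nu_v^{(2)}$ for all $v$, i.e.\ $\mu_\ast^{(1)}=\mu_\ast^{(2)}$.

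With uniqueness of the subsequential limit in hand, I would invoke relative compactness once more: a relatively compact family in which every convergent subsequence has the same limit converges, so $\mu_s\stv\mu_\ast$ in $\ov\bbr_0^d$ as $s\to\infty$. Dividing $\mu_s$ by the (regularly varying) normalization $(\nu\circledast\rho)(\{\|\bx\|>s\})$ and observing that this normalization is itself a regularly varying function of $s$ with index $-\alpha$, standard manipulations (rewriting $\nu(s\cdot)/\nu(\{\|\by\|>s\})$ in terms of $\mu_s$ and the ratio of the two normalizations) show that $\nu(s\cdot)/\nu(\{\|\by\|>s\})$ converges vaguely to a nonzero scalar multiple of $\mu_\ast$, which is exactly the statement that $\nu\in\RV(\alpha,\cdot)$; and $\mu_\ast\circledast\rho=\mu$ by \eqref{e:right.rel}.

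The main obstacle, and the only genuinely non-bookkeeping point, is verifying that the quadrant-decomposed system \eqref{e:main.system} is equivalent to the single identity $\mu_\ast^{(1)}\circledast\rho=\mu_\ast^{(2)}\circledast\rho$ \emph{on each quadrant of the target space} — one must check carefully that $T_\bx$ with $\bx$ in the $v$-quadrant maps the $w$-quadrant of the source to the $vw$-quadrant of the target, which is what produces the convolution structure indexed by $Q_d$ with the $vw$-twist appearing in \eqref{e:main.system}; everything else is a matter of matching hypotheses, for which I would lean on the tail bound from the proof of Theorem~\ref{t:subseq.char} to get \eqref{e:tail.cond.nu} and on \eqref{e:small.rho} to get \eqref{e:mom.cond.rho}.
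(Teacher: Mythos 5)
Your proposal is correct and follows essentially the same route as the paper's own proof: Theorem~\ref{t:subseq.char} gives relative compactness and the fixed-point equation \eqref{e:right.rel} for every subsequential limit, the quadrant decomposition (with $\rho_v$ the absolute-value image of $\rho$ restricted to the $v$-quadrant and the $vw$-indexing coming from the sign multiplication in $Q_d$) converts the equality of two solutions into the system \eqref{e:main.system}, and Theorem~\ref{prop:2} with $K=\{1,\ldots,d\}$ and $\alpha_j=\alpha$ yields uniqueness. Your verification of \eqref{e:tail.cond.nu} via the uniform tail bound from the proof of Theorem~\ref{t:subseq.char} is a harmless variant of the paper's appeal to \eqref{e:right.rel} and the scaling of $\mu$; all other steps match.
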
 
\begin{proof} 
Because of the statement of Theorem \ref{t:subseq.char}, we only need 
to prove that any two subsequential vague limits $\nu^{(1)}$ and 
$\nu^{(2)}$  
in that theorem 
coincide.  
Note that $\nu^{(1)}$ and $\nu^{(2)}$ are two solutions to the equation 
\eqref{e:right.rel}, so in order to prove that $\nu^{(1)}=\nu^{(2)}$ 
we translate our problem to the  
cancellation property situation of Theorem \ref{prop:2}.  For $v\in 
Q_d$ denote  
$$ 
\bbq_v = \bigl\{ \bx: \, x_jv_j\geq 0 \ \ \text{for each} \ \ 
j=1,\ldots, d\bigr\}\,, 
$$ 
and define 
$$ 
\rho_v(\cdot) = \rho\bigl( \{ \bx\in \bbq_v:\, (|x_1|, \dots, 
|x_d|)\in \cdot\}\bigr)\,. 
$$  
Similarly,  
we define two collections 
of $\sigma$-finite measures on $[0,\infty)^d$, $\bigl( \nu_v^{(i)}, \, 
v\in Q_d\bigr), \, i=1,2$, by restricting the measures $\nu^{(1)}$ and 
$\nu^{(2)}$ to the appropriate quadrants. By  assumption, 
$\nu^{(1)}\circledast \rho = \nu^{(2)}\circledast \rho$. Writing up 
this equality of measures on $\bbr^d$ for each quadrant of $\bbr^d$, 
we immediately see that the measures $(\rho_v, \, v\in Q_d)$ and  
$\bigl( \nu_v^{(i)}, \, v\in Q_d\bigr), \, i=1,2$, satisfy the system 
of equations \eqref{e:main.system}. 
 
We let $\alpha_j=\alpha$ for $j=1,\ldots, d$ and $K=\{ 1,\ldots, 
d\}$. Then \eqref{e:set.K} holds since the  
measures $\nu^{(1)}$ and $\nu^{(2)}$ are vague limits in $\ov\bbr_0^d$ 
and, hence, place no mass at the origin in $\bbr^d$. The assumption  
\eqref{e:mom.cond.rho} follows from \eqref{e:small.rho}. The 
assumption \eqref{e:tail.cond.nu} follows from the fact that both  
$\nu^{(1)}$ and $\nu^{(2)}$ satisfy \eqref{e:right.rel} and the 
scaling property of the tail measure $\mu$. Finally, the condition  
\eqref{e:cancel.condition} follows from \eqref{e:regvar.cond} and 
elementary manipulation of the sums and integrals. Therefore, Theorem 
\ref{prop:2} applies, and $\nu_v^{(1)}=\nu_v^{(2)}$ for each $v\in 
Q_d$. This means that $\nu^{(1)}=\nu^{(2)}$.  
\end{proof} 
 
\begin{remark} \label{rk:check.less} 
{\rm 
If the tail measure $\mu$ of $\nu \circledast \rho$ satisfies  
\begin{equation} \label{e:nice.mu} 
\mu \bigl( \{ \bx:\, x_k=0 \ \ \text{for each $k\in 
  K$}\}\bigr)=0 
\end{equation} 
for some non-empty set $K\subseteq \{ 1,\ldots, d\}$, then every 
measure $\mu_\ast$ satisfying \eqref{e:right.rel} has the same 
property:  
$$ 
\mu_\ast \bigl( \{ \bx:\, x_k=0 \ \ \text{for each $k\in 
  K$}\}\bigr)=0\,. 
$$ 
Therefore the measures $\bigl( \nu_v^{(i)}, \, 
v\in Q_d\bigr), \, i=1,2$, defined in the proof of Theorem 
\ref{t:inverse.general} satisfy \eqref{e:set.K}, and we can apply 
Theorem \ref{prop:2} with this smaller set $K$. In other words, if 
\eqref{e:nice.mu} holds, then the condition \eqref{e:regvar.cond} in  
Theorem \ref{t:inverse.general} has 
to be checked only for $j\in K$.  
} 
\end{remark} 
 
We can extend Theorem \ref{t:inverse.general} to the situation where 
the measure $\rho$ puts a positive mass on the axes. The next 
corollary follows from the theorem by splitting the space $\bbr^d$ 
into  subspaces of different dimensions, by setting some of the 
coordinates equal to zero. We omit details. 
 
\begin{corollary} \label{c:some.zeroes} 
Let $\alpha>0$ and $\rho$, $\nu$ be $\sigma$-finite 
measures on $\bbr^d$ \st\ \eqref{e:small.rho} and \eqref{e:small.tail} 
hold and $\nu \circledast \rho\in\RV(\alpha,\mu)$. Assume that for every 
$I_0\subset \{1,\ldots, d\}$ such that  
$$ 
\rho\bigl( \bigl\{ \bfx\in \bbr^d:\, x_i=0 \ \ \text{for all $i\in 
  I_0$}\bigr\}\bigr)>0  
$$ 
we have for every $I$ such that $I_0\cup I = \{1,\ldots,d\}$, 
\begin{equation} \label{e:regvar.cond.more} 
\int_{\bbr^d} |x_j|^{\alpha} \prod_{k\in I}  |x_k|^{i\theta_k} 
\prod_{k\in I} \bigl( {\rm sign}(x_k)\bigr)^{m_k}\, \rho(d\bx)\not= 0 
\end{equation} 
for each $j\in I$, $m_k\in \{0,1\}$ and 
$\theta_k\in \bbr,$ $k\in I$. Then the conclusions of Theorem 
\ref{t:inverse.general} hold.  
\end{corollary}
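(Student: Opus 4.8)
The corollary will be obtained from Theorems~\ref{t:subseq.char} and \ref{prop:2} by splitting $\bbr^d$ along its coordinate subspaces. I first reduce it to a cancellation statement. If $\rho\bigl(\{\bx:\, x_j\ne 0\}\bigr)=0$ for some $j$, then taking $I_0=\emptyset$ (admissible because $\rho$ is non-null) forces $I=\{1,\ldots,d\}$ in the hypothesis, and the factor $|x_j|^{\alpha}$ in \eqref{e:regvar.cond.more} vanishes $\rho$-almost everywhere; the integral is then $0$, the hypothesis fails, and the assertion is vacuous. So we may assume \eqref{e:rho.nond}. Theorem~\ref{t:subseq.char} now shows that the family $(\mu_s)$ of \eqref{e:tight.fam} is vaguely relatively compact and that every subsequential limit $\mu_\ast$ puts no mass at the origin or at the infinite points, satisfies $\mu_\ast\circledast\rho=\mu$, and obeys the uniform bound $\mu_\ast(B_\tau^c)\le C\tau^{-\alpha}$ extracted from its proof. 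As in the proof of Theorem~\ref{t:inverse.general} it remains to prove a cancellation property: if $\nu^{(1)},\nu^{(2)}$ are two such measures with $\nu^{(1)}\circledast\rho=\nu^{(2)}\circledast\rho$, then $\nu^{(1)}=\nu^{(2)}$.

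For $L\subseteq\{1,\ldots,d\}$ put $E_L=\{\bx:\, x_i\ne 0\iff i\in L\}$, let $V_L$ be the subspace spanned by the axes in $L$, and let $\pi_L$ be the projection of $\bbr^d$ onto the coordinates in $L$. Decompose $\rho=\sum_L\rho_L$ with $\rho_L=\rho|_{E_L}$. For $\bx\in E_L$ the diagonal map $T_\bx$ sends $\bbr^d$ into $V_L$, so $\nu\circledast\rho_L$ is carried by $V_L$; identifying $V_L$ with $\bbr^{|L|}$ through $\pi_L$, one verifies that $\nu\circledast\rho_L=\bigl((\pi_L)_*\nu\bigr)\circledast\bigl((\pi_L)_*\rho_L\bigr)$ and that $(\pi_L)_*\rho_L$ charges no coordinate subspace of $\bbr^{|L|}$. (The hypothesis with $I_0=\emptyset$, $I=\{1,\ldots,d\}$, $\theta=0$, $m=0$ forces $\rho\bigl((\bbr\setminus\{0\})^d\bigr)>0$, so the top piece is always present.) Thus $\nu\circledast\rho=\sum_L\nu\circledast\rho_L$ with the $L$-th summand supported on $V_L$, and restricting the equation $\nu^{(1)}\circledast\rho=\nu^{(2)}\circledast\rho$ to the strata $E_L$ yields a system triangular with respect to inclusion.

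One resolves this system by induction on $\dim V_L$, starting from the top. At a given stage the restriction of the equation to a stratum $E_L$ receives contributions only from the pieces $\rho_J$ with $J\supseteq L$; pushing forward by $\pi_L$, such a piece couples $\nu^{(1)}-\nu^{(2)}$ on the strata $E_S$ with $S\cap J=L$, and all those with $\dim V_S>\dim V_L$ have already been shown to vanish. What survives is, in $\bbr^{|L|}$, an equation of the form handled in the proof of Theorem~\ref{t:inverse.general}: after decomposing into the $2^{|L|}$ sign quadrants it is an instance of the system \eqref{e:main.system} in which the $\rho$-measure is the $\pi_L$-pushforward of $\rho$ restricted to $\{\bx:\, x_i\ne 0,\ i\in L\}$ --- a measure with no coordinate-subspace mass --- and the $\nu$-measures are the restrictions of (the $\pi_L$-projections of) $\nu^{(1)},\nu^{(2)}$ to the coordinate subspaces of $\bbr^{|L|}$, which inherit the tail bound \eqref{e:tail.cond.nu} and the absence of mass at the origin from the properties recorded above. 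Theorem~\ref{prop:2} then applies and --- just as in its own proof, where the box-functions $h_j$ recover the $\nu$-mass lying on the coordinate subspaces along with the rest --- gives the equality of $\nu^{(1)}$ and $\nu^{(2)}$ over the strata resolved at this level. The cancellation conditions \eqref{e:cancel.condition} required by Theorem~\ref{prop:2} here unwind precisely into the instances of \eqref{e:regvar.cond.more} with $I=L$, whose admissible choices of $I_0$ include all complements $I_0=(L')^c$ of subsets $L'\subseteq L$ with $\rho(V_{L'})>0$; since $I_0\cup L=\{1,\ldots,d\}$ for each of these, they are among the conditions granted by the hypothesis. Running the induction to the bottom gives $\nu^{(1)}=\nu^{(2)}$, hence uniqueness of $\mu_\ast$, and the conclusions of Theorem~\ref{t:inverse.general} follow.

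The step I expect to be the main obstacle is this inductive reduction: after subtracting the contributions already known to agree, one must check that what remains is genuinely of the form \eqref{e:main.system}, and --- more delicately --- that Theorem~\ref{prop:2} is invoked so as to recover the $\nu$-mass on the coordinate subspaces of the ambient $\bbr^{|L|}$ and not merely on its open quadrants, which is exactly what decides which strata get resolved at which stage. This is the same subtlety that makes the proof of Theorem~\ref{prop:2} non-trivial, and it is the reason the hypothesis carries the whole family \eqref{e:regvar.cond.more} rather than only the case $I=\{1,\ldots,d\}$. The remaining points --- the transport identity for $T_\bx$, the inheritance of the moment and tail conditions under projection, and the harmless removal of a possible atom of $\rho$ at the origin, which contributes only a point mass at $\mathbf 0$ to $\nu\circledast\rho$ and is invisible to regular variation --- are routine.
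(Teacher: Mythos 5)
The paper offers no proof of this corollary beyond the sentence ``follows from the theorem by splitting the space $\bbr^d$ into subspaces of different dimensions\dots We omit details'', so your stratification of $\bbr^d$ into the sets $E_L=\{\bx:\,x_i\ne 0\iff i\in L\}$, the transport identity under $\pi_L$, and the downward induction on $|L|$ are precisely the argument the authors are gesturing at, and your handling of the tail and moment conditions and of the choice of $K$ is fine. The gap sits exactly at the step you single out as the main obstacle: the matching of the hypothesis to the conditions the induction consumes. At level $L$ you must cancel $\tilde\rho_L=(\pi_L)_*\bigl(\rho|_{\{\bx:\,x_k\ne0\ \forall k\in L\}}\bigr)$, and you must do so for \emph{every} nonempty $L$, because a subsequential limit $\mu_\ast$ may charge $E_L$ even when $\rho$ charges no coordinate subspace $V_{L'}$ with $L'\subseteq L$. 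But, as your own sentence records, the hypothesis grants \eqref{e:regvar.cond.more} for $I=L$ only when some admissible $I_0\supseteq L^c$ satisfies $\rho(\{\bx:\,x_i=0\ \forall i\in I_0\})>0$, i.e.\ only when $\rho(V_L)>0$. When $\rho(V_L)=0$ the needed condition is simply not supplied, and it is not implied by the conditions at larger $I$: the integral over $\{x_k\ne0\ \forall k\in L\}$ is the sum of the integrals over the strata $E_J$, $J\supseteq L$, and a sum whose various partial sums are all constrained to be nonzero can still vanish.

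This is not a removable technicality. Take $d=3$ and $\rho=\delta_{(1,1,1)}+2^{-\alpha-1}\delta_{(1,0,2)}+2^{-\alpha-1}\delta_{(0,1,2)}$. The only conditions \eqref{e:regvar.cond.more} the hypothesis imposes are those for $I=\{1,2,3\},\{1,3\},\{2,3\}$ (read, as in your proof, as integrals over $\{x_k\ne0,\ k\in I\}$), and each relevant sum has the form $1+z$ with $|z|\le 1/2$, hence is nonzero. Yet $\tilde\rho_{\{3\}}=\delta_1+2^{-\alpha}\delta_2$ on $(0,\infty)$ has Mellin transform $1+2^{i\theta}$ at $\alpha+i\theta$, which vanishes at $\theta=\pi/\log 2$. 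Taking $\nu$ concentrated on the positive $x_3$--axis and equal there to the non--regularly varying measure $\nu_0$ of \eqref{e:nu.0} with $\theta_0=\pi/\log 2$ yields, exactly as in Remark \ref{rk:converse}, a measure $\nu\notin\RV$ with $\nu\circledast\rho\in\RV(\alpha,\mu)$ and with \eqref{e:small.rho} and \eqref{e:small.tail} trivially satisfied. So either \eqref{e:regvar.cond.more} has to be imposed for every nonempty $I$ (not only those admitting a charged $I_0$), in which case your derivation of the level-$L$ condition from the stated hypothesis must be redone, or the statement itself needs repair; in either case the induction as written does not close for $d\ge 3$.
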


\section{The inverse problem for weighted sums} \label{sec:sums} 
\setcounter{equation}{0} 
In this section we revisit the weighted sums of iid random vectors 
introduced in Example \ref{ex:linear.process}. We consider  
the special case of diagonal 
coefficient matrices. Our goal is to apply  the generalized 
cancellation theory of the previous section to investigate under what 
conditions on the coefficient matrices regular variation of the 
weighted sum implies regular variation of the 
underlying iid random vectors.  
\par 
Let $(\bfZ^{(i)})$ be an iid \seq\ of   $  \bbr^d$-valued random 
column vectors with a generic element $\bfZ$ and $(\bfd^{(i)})$ be 
deterministic vectors in  $\bbr^d$. The $i$th coefficient matrix 
$\Psi_i$ is a diagonal matrix with $\bfd^{(i)}$ on the main diagonal: 
$\Psi_i = {\rm diag}(\bfd^{(i)})$. The following theorem is the main 
result  of this section. The corresponding result for $d=1$ and 
positive weights $\psi_i$   
was proved in \cite{jacobsen:mikosch:rosinski:samorodnitsky:2008}, 
Theorem 3.3. 
\begin{theorem} \label{t:weighted.sums} 
Assume that the series $\bfX = \sum_{i=1}^\infty \Psi_i\,\bfZ^{(i)}$ 
converges a.s., $\bfX\in \RV(\alpha,\mu_{\bfX})$ 
and for some $0<\delta'<\alpha$,  
\beam\label{e:small.d} 
 \sum_{i=1}^\infty \| \bfd^{(i)}\|^{\alpha-\delta'}<\infty\,. 
\eeam 
Suppose also that all non-zero vectors $(\bfd^{(i)})$ have
non-vanishing coordinates. 
 If for all $j=1,\ldots, d$, for $m_1,\ldots, m_d\in \{0,1\}$ and 
$\theta_1,\ldots, \theta_d\in \bbr$,   
\begin{equation} \label{e:good.d} 
\sum_{l=1}^\infty \Big[ |d_j^{(l)}|^{\alpha} \prod_{k=1}^d |d_k^{(l)}|^{i\theta_k} 
\prod_{k=1}^d \bigl( {\rm sign}(d_k^{(l)} )\bigr)^{m_k} \Big] \not= 0\,, 
\end{equation} 
then $\bfZ$  
is \regvary\ with index $\alpha$ and \eqref{eq:10} holds.  
\end{theorem}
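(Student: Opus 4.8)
The plan is to reduce the statement about weighted sums to the multiplicative–convolution inverse result, Theorem \ref{t:inverse.general}, applied to the law $\nu$ of $\bfZ$ and a suitable deterministic ``weight measure'' $\rho$ built from the coefficient vectors $(\bfd^{(i)})$. First I would set $\nu = P(\bfZ\in\cdot)$ and define
\[
\rho = \sum_{i=1}^\infty \delta_{\bfd^{(i)}}\,,
\]
the counting measure on $\bbr^d$ supported on the (nonzero, by hypothesis nonzero-coordinate) coefficient vectors. With $T_\bx = {\rm diag}(\bx)$ one has $\Psi_i\bfZ^{(i)} = T_{\bfd^{(i)}}\bfZ^{(i)}$, so that each summand is distributed as the pushforward of $\nu$ under $T_{\bfd^{(i)}}$, and the key observation is that the series representation \eqref{eq:10} can be rewritten as $(\nu\circledast\rho)\in\RV(\alpha,\mu_\bfX)$, i.e.\ $\mu_\bfX = \sum_{i}\mu_\bfZ\circ\Psi_i^{-1} = \mu_\bfZ\circledast\rho$ once we know $\bfZ$ is regularly varying. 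Since $\rho$ is a deterministic point measure, $\rho(\{x_i=0\})=0$ follows from the assumption that the nonzero $\bfd^{(i)}$ have non-vanishing coordinates. The condition \eqref{e:regvar.cond} of Theorem \ref{t:inverse.general} translates verbatim into \eqref{e:good.d}, since $\int_{\bbr^d}|x_j|^\alpha\prod_k|x_k|^{i\theta_k}\prod_k({\rm sign}\,x_k)^{m_k}\,\rho(d\bx) = \sum_l |d_j^{(l)}|^\alpha\prod_k|d_k^{(l)}|^{i\theta_k}\prod_k({\rm sign}\,d_k^{(l)})^{m_k}$.

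The remaining work is to verify the hypotheses \eqref{e:small.rho} and \eqref{e:small.tail} of Theorem \ref{t:inverse.general} for this $\nu$ and $\rho$, and here the assumption \eqref{e:small.d} together with a.s.\ convergence of the series does the job. For \eqref{e:small.rho} I would note that $\int(\|\by\|^{\alpha-\delta'}\vee\|\by\|^{\alpha+\delta'})\,\rho(d\by) = \sum_i(\|\bfd^{(i)}\|^{\alpha-\delta'}\vee\|\bfd^{(i)}\|^{\alpha+\delta'})$; the $\alpha-\delta'$ part is finite by \eqref{e:small.d}, and the $\alpha+\delta'$ part follows because a.s.\ convergence of $\sum_i\Psi_i\bfZ^{(i)}$ with $\bfZ\in\RV(\alpha)$ (once established) forces $\|\bfd^{(i)}\|\to0$, so only finitely many terms exceed $1$ and for those $\|\bfd^{(i)}\|^{\alpha+\delta'}\le\|\bfd^{(i)}\|^{\alpha-\delta'}$; alternatively one argues directly from \eqref{e:small.d} that $\sup_i\|\bfd^{(i)}\|<\infty$, which bounds the $\alpha+\delta'$ moment by a constant times the $\alpha-\delta'$ moment. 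For \eqref{e:small.tail} the point is that $\rho$ has total mass that, on the region $\{0<|y_j|\le b\}$, is $\sum_{i:\,0<|d_j^{(i)}|\le b}$, and since $|d_j^{(i)}|\to0$ and the tail of $\nu\circledast\rho$ is regularly varying of index $\alpha$, a Potter-bound estimate of $\rho(\{|x_j|>s/|y_j|\})\nu(d\by)$ summed over the small-coordinate weights is dominated by $\big(\sum_{i:\,|d_j^{(i)}|\le b}|d_j^{(i)}|^{\alpha-\delta'}\big)$ times a slowly varying factor, which tends to $0$ as $b\downarrow0$ uniformly in $s$ by the tail of \eqref{e:small.d}.

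The one genuine subtlety — and the step I expect to be the main obstacle — is the circularity in establishing \eqref{e:small.rho}: Theorem \ref{t:inverse.general} is being used to \emph{conclude} that $\bfZ$ is regularly varying, yet the verification above invoked $\|\bfd^{(i)}\|\to0$, which I justified via regular variation of $\bfZ$. The clean way around this is to observe that \eqref{e:small.d} alone implies $\sum_i\|\bfd^{(i)}\|^{\alpha-\delta'}<\infty$ hence $\|\bfd^{(i)}\|\to0$, hence $\sup_i\|\bfd^{(i)}\|=:c<\infty$, and therefore $\|\bfd^{(i)}\|^{\alpha+\delta'}\le c^{2\delta'}\|\bfd^{(i)}\|^{\alpha-\delta'}$ for every $i$, so $\sum_i\|\bfd^{(i)}\|^{\alpha+\delta'}<\infty$ with no appeal to regular variation of $\bfZ$ at all. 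Once \eqref{e:small.rho} and \eqref{e:small.tail} are in place unconditionally, Theorem \ref{t:inverse.general} delivers that $\nu\in\RV(\alpha)$, i.e.\ $\bfZ\in\RV(\alpha,\mu_\bfZ)$ with $\mu_\bfZ=\mu_\ast$ the unique solution of $\mu_\ast\circledast\rho=\mu_\bfX$, and then \eqref{eq:10} is exactly the forward statement from Example \ref{ex:linear.process} (references \cite{hult:samorodnitsky:2008,hult:samorodnitsky:2010}) applied to the now-known regularly varying $\bfZ$, whose limit measure $\sum_j\mu_\bfZ\circ\Psi_j^{-1}=\mu_\bfZ\circledast\rho$ coincides with $\mu_\bfX$ by uniqueness. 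This completes the proof.
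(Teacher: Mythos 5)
Your overall architecture is the right one and matches the paper's: take $\nu$ to be the law of $\bfZ$, take $\rho=\sum_i\delta_{\bfd^{(i)}}$, check the hypotheses of Theorem \ref{t:inverse.general}, and observe that \eqref{e:regvar.cond} specializes exactly to \eqref{e:good.d}. Your verification of \eqref{e:small.rho} from \eqref{e:small.d} (via $\sup_i\|\bfd^{(i)}\|<\infty$, with no appeal to regular variation of $\bfZ$, thus avoiding the circularity you flag) is correct and is what is needed; \eqref{e:small.tail} is even easier than your sketch suggests, since $\rho$ has bounded support and hence the numerator there vanishes identically once $s$ is large, for each fixed $b$.

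There is, however, a genuine gap at the very first step. Theorem \ref{t:inverse.general} takes as hypothesis that the \emph{multiplicative convolution} $\nu\circledast\rho=\sum_i P(\Psi_i\bfZ\in\cdot)$ --- the sum of the marginal laws of the individual summands --- is regularly varying. What the theorem under proof assumes is that the \emph{law of the sum} $\bfX=\sum_i\Psi_i\bfZ^{(i)}$ is regularly varying. These are different measures, and passing from the second to the first is not a ``rewriting'' of \eqref{eq:10}: that display is the forward implication, valid when $\bfZ$ is already known to be regularly varying, which is precisely the conclusion you are after. The passage is the content of the paper's Lemma \ref{l:sum.of.tails}, namely the one-large-jump asymptotics $P(s^{-1}\bfX\in A)\sim\sum_i P\bigl(s^{-1}\Psi_i\bfZ\in A\bigr)$ for $\mu_\bfX$-continuity sets $A$ bounded away from the origin, proved using only regular variation of $\bfX$. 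Its proof requires (i) an a priori tail bound $P(\|\bfZ\|>s)\le C\,P(\|\bfX\|>s)$, obtained by conditioning on the complementary part of the series being bounded and exploiting independence; (ii) a truncation of the infinite series to a finite one, controlling $\lim_{q\to\infty}\limsup_{s\to\infty} P(\|\bfX^q\|>s)/P(\|\bfX\|>s)=0$; and (iii) two-sided sandwich bounds on $P(s^{-1}\bfX_q\in A)$ in terms of $\sum_j P(s^{-1}\Psi_j\bfZ\in A)$ up to error terms of order $\bigl(P(\|\bfZ\|>cs)\bigr)^2$. None of this appears in your proposal, and without it the main hypothesis of Theorem \ref{t:inverse.general} is simply not verified. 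Once that lemma is supplied, the remainder of your argument goes through as you describe.
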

\bre{\red \em
Of course, if some of the non-zero vectors $(\bfd^{(i)})$ have 
vanishing coordinates, we can use Corollary \ref{c:some.zeroes} 
instead of Theorem \ref{t:inverse.general}, and obtain regular 
variation of the vector $\bfZ$ under a more extensive set of conditions 
than \eqref{e:good.d}.}  
\ere 
We start the proof with the following lemma. 
\begin{lemma} \label{l:sum.of.tails} 
Assume the conditions of Theorem \ref{t:weighted.sums}  
 but the vectors $\bfd_i$, $i=1,2,\ldots,$ may also  
contain zero components. Then,  
 for any  
Borel set $A\subset \bbr^d$ bounded away from the origin and \st\ $A$ is a 
$\mu_{\bfX}$-continuity set, 
\beam\label{eq:14aa} 
P(s^{-1}\bfX\in A)\sim \sum_{i=1}^\infty P\bigl( s^{-1}\Psi_i\bfZ\in A\bigr)\,,\quad  s\to \infty\,.  
\eeam   
\end{lemma}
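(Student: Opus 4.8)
The statement is a principle-of-a-single-big-jump for the series $\bfX=\sum_i\Psi_i\bfZ^{(i)}$, and the plan is to prove it along the now-standard route (cf.\ \cite{hult:samorodnitsky:2008}), with the proviso that $\bfZ$ is \emph{not} yet known to be regularly varying, so every tail estimate for the summands must be extracted from the regular variation of $\bfX$ alone. I would organise the argument in four steps: (i) a tail-domination bound for the coordinates of $\bfZ$; (ii) an upper ``one-big-jump'' inequality; (iii) a matching lower inequality; (iv) a compactness argument identifying all subsequential limits of the normalised measures $\widetilde\nu_s(\cdot):=\bigl(P(\|\bfX\|>s)\bigr)^{-1}\sum_iP(s^{-1}\Psi_i\bfZ\in\cdot)$ with $\mu_{\bfX}$.

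For step (i), fix a coordinate $j$ and an index $l_0$ with $d_j^{(l_0)}\neq0$ (if no such $l_0$ exists the coordinate plays no role). The $j$-th coordinate $X_j=\sum_ld_j^{(l)}Z_j^{(l)}$ is an a.s.-convergent scalar weighted sum of iid copies of $Z_j$, and $|X_j|\le C\|\bfX\|$. Symmetrising and applying L\'evy's maximal inequality to the partial sums of $\sum_ld_j^{(l)}\widetilde Z_j^{(l)}$ --- together with the elementary bound $|S_m-S_{m-1}|\le 2\sup_k|S_k|$ --- gives $P(|d_j^{(l_0)}\widetilde Z_j|>2s)\le 2P(|\widetilde X_j|>s)\le 4P(|X_j|>s/2)$; a weak-symmetrisation inequality transfers this to $Z_j$ itself, and Potter's bounds (Proposition~0.8 in \cite{resnick:1987}) applied to the genuinely regularly varying tail $P(\|\bfX\|>\cdot)$ then yield $P(|Z_j|>s)\le C_jP(\|\bfX\|>s)$ for all large $s$. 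In particular $E|Z_j|^{\alpha-\delta'}<\infty$, and combining the coordinate bounds with \eqref{e:small.d} and Potter's bounds once more shows that $\sum_iP(\|\Psi_i\bfZ\|>\eta s)=O(P(\|\bfX\|>s))$ for every fixed $\eta>0$. Hence, exactly as in the proof of Theorem~\ref{t:subseq.char}, the family $(\widetilde\nu_s)$ is relatively compact in the vague topology on $\ov\bbr_0^d$ (Proposition~3.16 in \cite{resnick:1987}).

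For steps (ii) and (iii), fix a Borel set $A$ bounded away from $\bfO$ with $\delta_A:=\inf_{\bx\in A}\|\bx\|>0$, and for small $\vep>0$ write $A^{\vep}$, $A^{-\vep}$ for its closed $\vep$-enlargement and its $\vep$-erosion; for all but countably many $\vep$ these are $\mu_{\bfX}$-continuity sets bounded away from $\bfO$. In the lower bound I would use that the events $\{\Psi_i\bfZ^{(i)}\in sA^{-\vep}\}$ have pairwise-intersection probabilities summing to $O\bigl((\sum_iP(\|\Psi_i\bfZ\|>\delta_A s))^2\bigr)=o(P(\|\bfX\|>s))$, so that inclusion--exclusion plus the independence of $\Psi_i\bfZ^{(i)}$ and $\bfX-\Psi_i\bfZ^{(i)}$ gives
$$
P(\bfX\in sA)\ \ge\ \Bigl(1-\sup_iP(\|\bfX-\Psi_i\bfZ^{(i)}\|>\vep s)\Bigr)\sum_iP(\Psi_i\bfZ\in sA^{-\vep})\ -\ o\bigl(P(\|\bfX\|>s)\bigr),
$$
where $\sup_iP(\|\bfX-\Psi_i\bfZ^{(i)}\|>\vep s)\to0$ again from step (i). In the upper bound I would split according to the number of summands with $\|\Psi_i\bfZ^{(i)}\|>cs$, where $c<\delta_A-\vep$: two or more such summands contribute $o(P(\|\bfX\|>s))$ as above; on the event of exactly one large summand, say index $i$, one has $\Psi_i\bfZ^{(i)}\in sA^{\vep}$ unless the truncated remainder $\bfS_{s,c}:=\sum_{i}\Psi_i\bfZ^{(i)}\mathbf 1\{\|\Psi_i\bfZ^{(i)}\|\le cs\}$ has norm $>\vep s$; and on the event of no large summand, $\bfX=\bfS_{s,c}$, which has norm $>\delta_A s$ on $\{\bfX\in sA\}$. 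This yields
$$
P(\bfX\in sA)\ \le\ \sum_iP(\Psi_i\bfZ\in sA^{\vep})\ +\ P\bigl(\|\bfS_{s,c}\|>\vep s\bigr)\ +\ P\bigl(\|\bfS_{s,c}\|>\delta_As\bigr)\ +\ o\bigl(P(\|\bfX\|>s)\bigr).
$$

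The hard part is the truncated-sum estimate $\lim_{c\downarrow0}\limsup_{s\to\infty}P(\|\bfS_{s,c}\|>\eta s)/P(\|\bfX\|>s)=0$ for each fixed $\eta>0$, which I would establish by a standard centering-and-truncation argument --- bounding separately the probability that $\bfS_{s,c}$ contains many ``medium-sized'' summands and a moment bound (of order $\alpha\mp\delta'$, split as usual at level $cs$) on the remaining genuinely small part, using $E|Z_j|^{\alpha-\delta'}<\infty$ from step (i) and $\sum_i\|\bfd^{(i)}\|^{\alpha-\delta'}<\infty$. Granting this, choosing $c$ small enough makes all the $P(\|\bfS_{s,c}\|>\cdot\,s)$ terms $o(P(\|\bfX\|>s))$; dividing the two displays by $P(\|\bfX\|>s)$ and letting $s\to\infty$ along any subsequence along which $\widetilde\nu_{s}\stv\widetilde\nu_{\ast}$, and using that $\bfX\in\RV(\alpha,\mu_{\bfX})$ makes $P(\bfX\in sA)/P(\|\bfX\|>s)\to\mu_{\bfX}(A)$ at continuity sets, gives $\widetilde\nu_{\ast}(A^{-\vep})\le\mu_{\bfX}(A)\le\widetilde\nu_{\ast}(A^{\vep})$; letting $\vep\downarrow0$ and using that $A$ is a continuity set of both $\mu_{\bfX}$ and $\widetilde\nu_{\ast}$ forces $\widetilde\nu_{\ast}(A)=\mu_{\bfX}(A)$. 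Since such sets $A$ form a determining class, every subsequential limit equals $\mu_{\bfX}$, so $\widetilde\nu_{s}\stv\mu_{\bfX}$; evaluating at the $\mu_{\bfX}$-continuity set $A$ in the lemma yields $\sum_iP(s^{-1}\Psi_i\bfZ\in A)\sim\mu_{\bfX}(A)P(\|\bfX\|>s)\sim P(s^{-1}\bfX\in A)$, which is \eqref{eq:14aa}.
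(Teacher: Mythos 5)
Your overall strategy is sound and would work, but it is organized quite differently from the paper's proof, and the difference matters for how much you actually have to prove. The paper's argument has three light steps: (a) the tail--domination bound $P(\|\bfZ\|>s)\le C\,P(\|\bfX\|>s)$ is obtained in two lines by conditioning -- writing $\bfY^{(j)}=\bfX-\Psi_{i(j)}\bfZ^{(i(j))}$ and using $P(\|\bfX\|>s)\ge P(\|\bfY^{(j)}\|\le M_j)\,P(|d^{(i(j))}_j||Z_j|>s+M_j)$ -- rather than via your symmetrization/L\'evy-inequality route (which is also correct, just heavier); (b) the infinite series is immediately reduced to the finite partial sum $\bfX_q$ by showing $\lim_q\limsup_s P(\|\bfX^q\|>s)/P(\|\bfX\|>s)=0$ for the remainder $\bfX^q=\bfX-\bfX_q$, an estimate taken from the Appendix of \cite{mikosch:samorodnitsky:2000} once \eqref{eq:15a} is in hand; (c) for the finite sum the single-big-jump sandwich $A_\epsilon\subset A\subset A^\epsilon$ is closed with only the trivial ``two large summands'' bound $\tfrac{q(q-1)}{2}\bigl(P(\|\bfZ\|>cs)\bigr)^2$, and the liminf/limsup inequalities pin down the limit directly at the given $\mu_{\bfX}$-continuity set, with no subsequential-limit or determining-class detour.

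You instead keep the whole series and truncate the \emph{individual summands} at level $cs$, in the spirit of \cite{hult:samorodnitsky:2008}. This forces you to prove the uniform truncated-sum estimate $\lim_{c\downarrow0}\limsup_s P(\|\bfS_{s,c}\|>\eta s)/P(\|\bfX\|>s)=0$, which you correctly identify as the hard part but only sketch. That estimate is not a formality: with only $E\|\bfZ\|^{\alpha-\delta'}<\infty$ and \eqref{e:small.d} available, a naive Markov bound at order $\alpha-\delta'$ gives $O(s^{-(\alpha-\delta')})$, which is \emph{not} $o(P(\|\bfX\|>s))\approx o(s^{-\alpha})$; one genuinely needs the centering/higher-moment-after-truncation machinery (and, for $\alpha>1$, centering), i.e.\ essentially the content of the Mikosch--Samorodnitsky appendix lemma that the paper simply cites for the tail series. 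So your route is admissible but front-loads the hardest analysis into a step you have not carried out, whereas the paper's finite-sum reduction confines all of that difficulty to a quotable lemma. A smaller point: your final passage from the two-sided sandwich to $\widetilde\nu_\ast(A)=\mu_{\bfX}(A)$ needs $A$ (or the sets in your determining class) to be continuity sets of the unknown limit $\widetilde\nu_\ast$ as well; this is fixable by working with rectangles of all but countably many corner points, but you should say so -- the paper's direct liminf/limsup argument avoids the issue entirely.
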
 
\begin{proof} 
For every $j=1,\ldots, d$, we may assume that there is 
$i(j)=1,2,\ldots$ such that $d^{(i(j))}_j\not =0$ for, if this is not 
the case, we can simply delete the $j$th coordinate. Denote 
$$ 
\bfY^{(j)} = \bfX-\Psi_{i(j)}\,\bfZ^{(i(j))} 
$$ 
and choose $M_j>0$ such that $P(\| \bfY^{(j)}\|\leq M_j)>0$, 
$j=1,\ldots, d$. We have for $s>0$ 
and $j=1,\ldots, d$,  
$$ 
P(\|\bfX\|>s) \geq P(\| \bfY^{(j)}\|\leq M_j) P\bigl(|d^{(i(j))}_j||Z_j|>s+M_j\bigr)\,, 
$$ 
 and the \regvar\ of $\bfX$ implies that there is $C_j>0$ \st 
\beao 
P(|Z_j|>s)\le C_j P(|\bfX\|>s)\,,\quad s>0\,, 
\eeao 
and therefore there is $C>0$ such
that  
 \beam\label{eq:15a} 
P(\|\bfZ\|>s)\le C\,P(\|\bfX\|>s)\quad \mbox{for all $s>0$}\,. 
\eeam 
 
We write $\bfX_q=\sum_{i=1}^q \Psi_i \bfZ^{(i)}$ and $\bfX^q=\bfX-\bfX_q$ 
for $q\ge 1$.  In the usual notation,  
$$ 
A^{\epsilon}=\{\bfy\in\bbr^d_0: d(\bfy,A)\le \epsilon\}, \ \  
A_{\epsilon}=\{\bfy\in A: d(\bfy,A^c)>\epsilon\}\,,  
$$ 
we have 
\beam\label{eq:w3a} 
\lefteqn{P(s^{-1}\bfX_q\in A_\epsilon)\,P(\|\bfX^q\|) 
\le \epsilon s)}\nonumber\\ 
&\le& P(s^{-1}\bfX\in A)\le P(s^{-1}\bfX_q\in A^\epsilon)+ 
P(\|\bfX^q\|> \epsilon s)\,. 
\eeam 
Proceeding as in the Appendix of 
\cite{mikosch:samorodnitsky:2000} and 
using \eqref{eq:15a}, we obtain 
\beao 
\lim_{q\to\infty}\limsup_{s \to \infty} 
\dfrac{P(\|\bfX^q\|>s)}{P(\|\bfX\|>s)} =0\,. 
\eeao 
Therefore and by virtue of \eqref{eq:w3a} it suffices to prove the 
lemma for $\bfX_q$ instead of $\bfX$. In what follows, we assume $q<\infty$ and 
suppress the dependence of $\bfX$ on $q$ in the notation.  
\par 
 
Let $M= \max_{i=1,\ldots,q, \, j=1,\ldots, d}|d^{(i)}_j| $. For   $\epsilon>0$ we have  
\beao 
\lefteqn{P(s^{-1}\bfX\in A_\epsilon)}\\ 
&\le &\sum_{j=1}^q P(s^{-1}\Psi_j\bfZ\in A)  
+ \dfrac{q\,(q-1)}{2} \left(P\left(\|\bfZ\|> 
    \dfrac{s\,\epsilon}{(q-1)M}\right) 
\right)^2\,.  
\eeao 
Hence, by \eqref{eq:15a} and \regvar\ of $\bfX$, 
\beao 
\mu_{\bfX}(A_\epsilon)\leq \liminf_{s\to \infty}\dfrac{P(s^{-1}\bfX\in 
  A_\epsilon)}{P(\|\bfX\|>s)} 
\leq \liminf_{s\to \infty}  \dfrac{\sum_{j=1}^q  P(s^{-1}\Psi_j\bfZ\in A)}{P(\|\bfX\|>s)} \,. 
\eeao 
Letting $\epsilon\downarrow 0$ and using that $A$ is a 
$\mu_{\bfX}$-continuity set, we have 
\beao 
\mu_{\bfX}(A)\le\liminf_{s\to \infty} 
\dfrac{ \sum_{j=1}^q   P(s^{-1}\Psi_j\bfZ\in A)}{P(\|\bfX\|>s)}\,,  
\eeao 
and \eqref{eq:14aa} will follow once we show that 
\beam\label{eq:17} 
\mu_{\bfX}(A)\ge \limsup_{s\to \infty}  
\dfrac{\sum_{j=1}^q  P(s^{-1}\Psi_j\,\bfZ\in A)}{P(\|\bfX\|>s)}\,. 
\eeam 
Let $\delta:= \inf\{ \|\bfx\|:\, \bfx\in A\}>0$. For $0<\epsilon<  
\delta$   write  
\beao 
P(s^{-1}\bfX\in A^\epsilon)&\ge &  
P\left( 
\bigcup_{i=1}^q 
\left\{ 
s^{-1}\Psi_i\bfZ^{(i)}\in 
  A\,,\left\|\sum_{1\le l\ne i\le q}\Psi_l\,\bfZ^{(l)}\right\|\le 
  s\epsilon 
\right\}\right)\\[2mm] 
&\ge &\sum_{i=1}^q P\left( 
s^{-1}\Psi_i\bfZ^{(i)}\in 
  A\,,\left\|\sum_{1\le l\ne i\le q}\Psi_l\,\bfZ^{(l)}\right\|\le 
  s\epsilon\right) \\[2mm] 
&&-\dfrac{q(q-1)}{2} \left(P(\|\bfZ\|\ge s\delta/M)\right)^2\\ 
&\ge &\sum_{i=1}^q P\left( 
s^{-1}\Psi_i\bfZ^{(i)}\in   A \right) 
-\dfrac{q(q-1)}{2} \left(P(\|\bfZ\|\ge s\delta/M)\right)^2 \\ 
&&- q(q-1) P(\|\bfZ\|\ge s\delta/M) P(\|\bfZ\|\ge s\epsilon/((q-1)M))\,. 
\eeao 
Thus  by \regvar\ of $\bfX$ and \eqref{eq:15a}, 
\beao 
\mu_{\bfX}(A^\epsilon)\geq \limsup_{s\to \infty} \dfrac{P(s^{-1}\bfX\in 
  A^\epsilon)}{P(\|\bfX\|>s)} 
\ge \limsup_{s\to \infty} \dfrac{\sum_{i=1}^q P\left( 
s^{-1}\Psi_j\bfZ\in 
  A\right)}{P(\|\bfX\|>s)}\,. 
\eeao 
Letting $\epsilon\downarrow 0$ and using the $\mu_{\bfX}$-continuity of 
$A$, we obtain the desired relation \eqref{eq:17}. 
\end{proof} 
\medskip 
 
\noindent 
\begin{proof}[Proof of Theorem \ref{t:weighted.sums}] 
It follows from Lemma \ref{l:sum.of.tails} that the measure  
$$ 
\mu(\cdot) = \sum_{i=1}^\infty P\bigl(  \Psi_i\bfZ\in \cdot\bigr) \ \ 
\text{on $\bbr^d$} 
$$ 
is regularly varying with index  $\alpha$. Note that $\mu=\nu 
\circledast \rho$, where $\nu$ is the law of $\bfZ$ (a probability 
measure), and 
$$ 
\rho = \sum_{i=1}^\infty \delta_{\bfd^{(i)}}\,, 
$$ 
with the usual notation $\delta_{\bfa}$ standing for the unit mass at 
the point $\bfa\in\bbr^d$. Note that the conditions of Theorem 
\ref{t:inverse.general} are satisfied; in particular 
\eqref{e:small.tail} holds because the measure $\rho$ has bounded 
support. Therefore, the conclusion of Theorem \ref{t:weighted.sums} 
follows.  
\end{proof} 
\bexam {\red 
Consider the vector AR(1) difference
equation $\bfX_t= \Psi \bfX_{t-1}+\bfZ_t$, $i\in\bbz$, for an iid
$\bbr^d$-valued \seq\ $(\bfZ_t)$ and a matrix $\Psi={\rm
diag}(\bfd)$ for some deterministic vector
$\bfd\in\bbd^d$ with nonvanishing coordinates. A unique stationary causal solution to the AR(1)
equation exists \fif\ $\max_{i=1,\ldots,d}|d_i|<1$ and $\bfZ_1$ has
some finite logarithmic moment. A generic 
element $\bfX$ of the solution satisfies the relation $\bfX\eqd
\sum_{j=0}^\infty \Psi^j\bfZ_j$.  Assume that $\bfX$ is \regvary\ 
with index $\alpha>0$.
Then  \eqref{e:small.d} is
trivially satisfied and \eqref{e:good.d} reads as follows: for every 
$j=1,\ldots,d$, any $m_i\in \{0,1\}, \theta_i\in\bbr$, $i=1,\ldots,d$,
\beao
|d_j|^\alpha \prod_{k=1}^d |d_k|^{i\theta_k}
\prod_{k=1}^d (\sign(d_k))^{m_k}
\Big(1-|d_j|^\alpha \prod_{k=1}^d |d_k|^{i\theta_k}
\prod_{k=1}^d (\sign(d_k))^{m_k}\Big)^{-1}\ne 0\,,
\eeao
This condition is always satisfied. Hence any $\bfZ_t$ is \regvary\
with index $\alpha>0$.
}
\eexam

A special case of the setup of this section is a sum with scalar 
weights, of the type   
$\bfX=\sum_{i=1}^\infty \psi_i \bfZ^{(i)}$, where $(\psi_i)$ is a 
sequence of scalars. Applying  Theorem \ref{t:weighted.sums} with  
$d_j^{(i)}=\psi_i, \, j=1,\ldots, d$ for $i=1,2,\ldots$, we obtain the 
following corollary.  
\begin{corollary}\label{prop:3} 
Let $\alpha>0$, and suppose that for some $0<\delta<\alpha$, 
\begin{equation} \label{e:small.psi} 
 \sum_{i=1}^\infty|\psi_i|^{\alpha-\delta}<\infty\,. 
\end{equation}  
 Assume that the series $\bfX = \sum_{i=1}^\infty \psi_i\,\bfZ^{(i)}$ 
converges a.s. and $\bfX$ is \regvary\ 
with index $\alpha$. If  
\beam\label{eq:13a} 
\sum_{j=1}^\infty |\psi_j|^{\alpha+i\theta}&\ne& 0\,,\quad \theta\in 
\bbr\,,\quad  \mbox{and} \\[2mm] 
\sum_{j: \psi_j>0}\psi_j^{\alpha+i\theta}&\ne&  
\sum_{j: \psi_j<0}|\psi_j|^{\alpha+i\theta}\,,\quad \theta\in 
\bbr\,,\label{eq:13b} 
\eeam 
 then $\bfZ\in\RV(\alpha,\mu_{\bfZ})$ and the tail 
 measure $\mu_\bfZ$ satisfies  
\beao 
\dfrac{P(s^{-1}\bfX \in \cdot)}{P(|\bfZ|>s)}\stv  
  \psi_+ \,\mu_\bfZ(\cdot) + 
 \psi_- \,\mu_\bfZ(-\cdot)\,,\quad s\to\infty\,, 
\eeao 
where 
\beam\label{eq:psi} 
\psi_+=\sum_{j:\psi_j>0} \psi_j^\alpha\quad \mbox{and}\quad  
\psi_-=\sum_{j:\psi_j<0} |\psi_j|^\alpha\,. 
\eeam 
\end{corollary}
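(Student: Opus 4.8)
The plan is to recognize Corollary~\ref{prop:3} as the special case of Theorem~\ref{t:weighted.sums} in which every coefficient matrix is a scalar multiple of the identity. Concretely, I would put $\bfd^{(i)}=(\psi_i,\ldots,\psi_i)\in\bbr^d$, so that $\Psi_i={\rm diag}(\bfd^{(i)})=\psi_i I_d$ and the series $\sum_{i=1}^\infty\Psi_i\bfZ^{(i)}$ coincides with the weighted sum $\bfX$ of the corollary. Its a.s.\ convergence and $\bfX\in\RV(\alpha,\mu_\bfX)$ are assumed. Since $\|\bfd^{(i)}\|$ is a fixed multiple of $|\psi_i|$, condition \eqref{e:small.d} is just \eqref{e:small.psi} with a different $\delta$. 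Finally, whenever $\psi_i\neq0$ all $d$ coordinates of $\bfd^{(i)}$ equal $\psi_i\neq0$, so the nonvanishing-coordinates hypothesis holds, while the indices $i$ with $\psi_i=0$ contribute the zero vector, which is permitted.

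The one substantive step is to show that the cancellation condition \eqref{e:good.d} collapses to the pair \eqref{eq:13a}--\eqref{eq:13b}. Substituting $d_k^{(l)}=\psi_l$ for every $k$, the $l$-th summand in \eqref{e:good.d} becomes $|\psi_l|^{\alpha}\,|\psi_l|^{i\theta}\,(\sign\psi_l)^{m}$ with $\theta:=\theta_1+\cdots+\theta_d$ and $m:=m_1+\cdots+m_d$; the terms with $\psi_l=0$ drop out because of the factor $|\psi_l|^{\alpha}$. As $(\theta_k)$ ranges over $\bbr^d$ the number $\theta$ ranges over all of $\bbr$; since $d\geq1$ the integer $m$ attains an even value ($m=0$) and an odd value ($m=1$); and only the parity of $m$ affects $(\sign\psi_l)^m$. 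Hence \eqref{e:good.d} holding for all admissible $(m_k)$ and $(\theta_k)$ is equivalent to requiring, for all $\theta\in\bbr$,
\[
\sum_{l:\psi_l>0}\psi_l^{\alpha+i\theta}+\sum_{l:\psi_l<0}|\psi_l|^{\alpha+i\theta}\neq0
\quad\text{and}\quad
\sum_{l:\psi_l>0}\psi_l^{\alpha+i\theta}-\sum_{l:\psi_l<0}|\psi_l|^{\alpha+i\theta}\neq0,
\]
which are precisely \eqref{eq:13a} and \eqref{eq:13b}.

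With the hypotheses verified, Theorem~\ref{t:weighted.sums} delivers $\bfZ\in\RV(\alpha,\mu_\bfZ)$ together with \eqref{eq:10}, namely $P(s^{-1}\bfX\in\cdot)/P(\|\bfZ\|>s)\stv\sum_{j=1}^\infty\mu_\bfZ\circ\Psi_j^{-1}$. It then remains to rewrite the limit measure: for $\psi_j>0$ we have $\Psi_j^{-1}=\psi_j^{-1}I_d$, and the homogeneity $\mu_\bfZ(tA)=t^{-\alpha}\mu_\bfZ(A)$ gives $\mu_\bfZ\circ\Psi_j^{-1}=\psi_j^{\alpha}\mu_\bfZ$; for $\psi_j<0$ one has $\Psi_j^{-1}A=-|\psi_j|^{-1}A$, whence $\mu_\bfZ\circ\Psi_j^{-1}(\cdot)=|\psi_j|^{\alpha}\mu_\bfZ(-\cdot)$. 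Summing over $j$ yields $\psi_+\,\mu_\bfZ(\cdot)+\psi_-\,\mu_\bfZ(-\cdot)$ with $\psi_\pm$ as in \eqref{eq:psi}, which is the claimed convergence. I do not expect any real obstacle; the only place calling for care is the bookkeeping in the reduction of \eqref{e:good.d}, in particular checking that $\theta=\sum_k\theta_k$ exhausts $\bbr$ and that exactly the two parities of $m$ are relevant.
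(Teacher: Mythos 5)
Your proposal is correct and follows exactly the paper's route: the paper derives Corollary~\ref{prop:3} by applying Theorem~\ref{t:weighted.sums} with $d_j^{(i)}=\psi_i$ for all $j$, and your reduction of \eqref{e:good.d} to the pair \eqref{eq:13a}--\eqref{eq:13b} via the parity of $m=\sum_k m_k$ and $\theta=\sum_k\theta_k$, together with the homogeneity computation $\mu_\bfZ\circ\Psi_j^{-1}=\psi_j^\alpha\mu_\bfZ$ (resp.\ $|\psi_j|^\alpha\mu_\bfZ(-\cdot)$ for $\psi_j<0$), supplies precisely the details the paper leaves implicit.
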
 
 
\begin{remark} \label{rk:converse} 
{\rm  
Corollary \ref{prop:3} has a natural converse statement. Specifically, 
if either \eqref{eq:13a} or \eqref{eq:13b} fail to hold for some real 
$\theta$, then there is a random vector $\bfZ$ that is not regularly 
varying but $\bfX=\sum_{i=1}^\infty \psi_i \bfZ^{(i)}$ is 
regularly varying. Indeed, suppose, for example, that \eqref{eq:13a} 
fails for some real $\theta_0$. We use a construction similar to that in  
\cite{jacobsen:mikosch:rosinski:samorodnitsky:2008}.  
Choose real numbers $a,b$ satisfying 
$0<a^2+b^2\le 1$, and define a measure on $(0,\infty)$ by 
\begin{equation} \label{e:nu.0} 
\nu_0(dx)=[1+ a\,\cos(\theta_0\,\log x)+b\,\sin(\theta_0\,\log 
x)]\, 
\nu^\alpha(dx)\,,  
\end{equation}   
where $\nu^\alpha$ is given in \eqref{e:nu.alpha}. Choose $r>0$ large 
enough so  
that $\nu_0(r,\infty)\leq 1$, define a probability law 
on $(0,\infty)$ by  
$$ 
\mu_0(B) = \nu_0\bigl( B\cap (r,\infty)\bigr) + \bigl[ 
  1-\nu_0(r,\infty)\bigr]\bfe_B(1)\quad\mbox{for any Borel set $B$,} 
$$  
and a probability law on $\bbr$ by  
$$ 
\mu_\ast(\cdot) = \frac12 \mu_0(\cdot) + \frac12 \mu_0(-\cdot). 
$$ 
Obviously, $\mu_\ast$ is not a regularly varying probability 
measure. Therefore, neither is the random vector $\bfZ=(Z,0, \ldots, 
0)$ regularly varying, where $Z$ has \ds\ $\mu_\ast$.  
 
Since the vector $\bfZ$ is symmetric, the series 
$\bfX=\sum_{i=1}^\infty \psi_i \bfZ^{(i)}$ converges a.s. under the 
assumption \eqref{e:small.psi}; see Lemma A.3 in 
\cite{mikosch:samorodnitsky:2000}, and the argument in 
\cite{jacobsen:mikosch:rosinski:samorodnitsky:2008} shows that $\bfX$ 
is regularly varying with index $\alpha$.  
 
On the other hand, suppose  that \eqref{eq:13b} fails for some real 
$\theta_0$. Define $\nu_0$ as in \eqref {e:nu.0}, and define another measure 
on $(0,\infty)$ by 
$$ 
\nu_1(dx)=[1-a\,\cos(\theta_0\,\log x)-b\,\sin(\theta_0\,\log 
x)]\, 
\nu^\alpha(dx)\,. 
$$ 
Convert $\nu_0$ into a probability measure $\mu_0$ as above, and 
similarly convert $\nu_1$ into a probability measure $\mu_1$. Define a 
probability measure on $\bbr$ by  
$$ 
\mu_\ast(\cdot) = \frac12 \mu_0(\cdot) + \frac12 \mu_1(-\cdot). 
$$ 
Once again, let $\bfZ=(Z_1,0, \ldots, 
0)$, where $Z_1\sim \mu_\ast$. Then $\bfZ$ is not regularly 
varying, and neither is the vector  
$$ 
\tilde \bfZ = \left\{ 
\begin{array}{ll} 
\bfZ\,,  & \text{if} \ 0<\alpha\leq 1\,,\\ 
\bfZ-E(\bfZ)\,, & \text{if} \ \alpha> 1\,. 
\end{array} 
\right. 
$$ 
As before, the series 
$\bfX=\sum_{i=1}^\infty \psi_i \tilde \bfZ^{(i)}$ converges a.s. under the 
assumption \eqref{e:small.psi}, and $\bfX$ 
is regularly varying with index  $\alpha$.  
} 
\end{remark} 
We proceed with several examples of the situation described in 
Corollary \ref{prop:3}. We say that the coefficients $\psi_1,\psi_2,\ldots,$ 
are {\em $\alpha$-\regvar\ determining} if \regvar\ of 
$\bfX=\sum_{i=1}^\infty \psi_i \bfZ^{(i)}$ implies \regvar\ 
of $\bfZ$. In other words,  both conditions \eqref{eq:13a} and 
\eqref{eq:13b} 
must be satisfied. 
\bexam\label{exam:1} 
Let $q<\infty$ and assume that $\psi_i=1$, $i=1,\ldots,q$,  $\psi_i=0$ 
for $i>q$. By Corollary \ref{prop:3} 
these coefficients are $\alpha$-\regvar\ determining 
and $P(s^{-1}\bfX \in \cdot) \sim q P(s^{-1}\bfZ \in \cdot)$ as 
$s\to\infty$.  For $d=1$ (only in this case the notion of 
subexponentiality is properly defined) this property is in agreement 
with the   
{\em convolution root property} of subexponential \ds s; see   
\cite{embrechts:goldie:veraverbeke:1979};  
cf. Proposition~A3.18 in  
al. \cite{embrechts:kluppelberg:mikosch:1997}. Indeed, if $X$ is a  
positive \rv\ then \regvar\ of $Z$ implies subexponentiality. 
\eexam 
\bexam\label{exam:2} 
Again, let $q<\infty$ and  $\psi_j=0$ for $j>q$.  
If, say, $|\psi_1|^\alpha>\sum_{j=2}^q |\psi_j|^\alpha$, then  
both conditions 
\eqref{eq:13a} and \eqref{eq:13b} are satisfied and therefore 
 the coefficients are $\alpha$-\regvar\ determining. This is, of 
course, the same phenomenon as in  Corollary 
\ref{c:atoms.cancel}. In the special case, $q=2$,   
if $\psi_1\not=-\psi_2$,   then the coefficients are $\alpha$-\regvar\ 
determining. On the other hand,  If $\psi_1=-\psi_2$,   then 
condition \eqref{eq:13b} fails, and the coefficients are not 
$\alpha$-\regvar\ determining.  
This means that \regvar\  of 
$\bfX=\bfZ_1-\bfZ_2$ does not necessarily imply \regvar\ of $\bfZ$.  
\eexam 
 

\section{The inverse problem for products} \label{sec:products} 
\setcounter{equation}{0} 
 
We now apply the generalized cancellation theory to Example 
\ref{ex:products} above. We concentrate on the case of 
multiplication by a random diagonal matrix. Specifically, let $\bfA= 
{\rm diag}(A_1,\ldots,A_d)$ for some \rv s $(A_i)$, 
$i=1,\ldots,d$. The following theorem is an easy application of 
Theorem \ref{t:inverse.general}.  
\begin{theorem} \label{t:product.inverse} 
Assume that $P(A_j=0)=0$ for $j=1,\ldots, d$. Let $\bfA= 
{\rm diag}(A_1,\ldots,A_d)$. Let $\bfZ$ be a $d$-dimensional random 
vector independent of $\bfA$, such that $\bfX=\bfA \bfZ$ is regularly 
varying with index $\alpha>0$. If $E\|\bfA\|^{\alpha+\delta}< 
\infty$ for some $\delta>0$ and  
\begin{equation} \label{e:cond.product} 
E\left( |A_j|^\alpha \prod_{k=1}^d \bigl( |A_k|^{i\theta_k} ({\rm 
    sign}(A_k))^{m_k}\bigr)\right) \not= 0 
\end{equation} 
for each $j=1,\ldots, d$, $m_1,\ldots, m_d\in \{0,1\}$ and 
$\theta_1,\ldots, \theta_d\in \bbr$, then $\bfZ$ is regularly 
varying with index $\alpha>0$. Moreover, \eqref{e:product.regvar} 
holds.  
\end{theorem}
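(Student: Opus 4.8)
\noindent\emph{Proof proposal.}\ The plan is to recognize the law of $\bfX=\bfA\bfZ$ as a multiplicative convolution, invoke Theorem~\ref{t:inverse.general} to obtain \regvar\ of $\bfZ$, and then read off \eqref{e:product.regvar} from the Breiman-type statement recalled in Example~\ref{ex:products}. Let $\rho$ denote the law on $\bbr^d$ of the random vector $(A_1,\ldots,A_d)$ and $\nu$ the law of $\bfZ$. Conditioning on $\bfA$ and using independence, $P(\bfX\in B)=\int P(T_{\bx}\bfZ\in B)\,\rho(d\bx)=\int\nu\bigl(T_{\bx}^{-1}(B)\bigr)\,\rho(d\bx)=(\nu\circledast\rho)(B)$ for every Borel set $B$; this is legitimate because $\rho(\{\bx:\,x_i=0\})=P(A_i=0)=0$ for each $i$, so $T_{\bx}$ is invertible for $\rho$-a.e.\ $\bx$. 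Hence $\nu\circledast\rho\in\RV(\alpha,\mu_{\bfX})$, and the axis condition $\rho(\{\bx:\,x_i=0\})=0$ required in Theorem~\ref{t:inverse.general} holds.

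The next step is to check \eqref{e:small.rho}, \eqref{e:small.tail} and \eqref{e:regvar.cond} for this $\rho$ and $\nu$. Since $|A_j|$ and the vector norm of $(A_1,\ldots,A_d)$ are both dominated by $\|\bfA\|$ up to a dimensional constant, the hypothesis $E\|\bfA\|^{\alpha+\delta}<\infty$ gives $E|A_j|^{\alpha+\delta}<\infty$ for every $j$ and, choosing $0<\delta'<\delta\wedge\alpha$ and using $\|\by\|^{\alpha-\delta'}\vee\|\by\|^{\alpha+\delta'}\le 1+\|\by\|^{\alpha+\delta}$, it yields \eqref{e:small.rho}. For \eqref{e:small.tail} at a fixed coordinate $j$, Markov's inequality gives $\rho(\{\bx:\,|x_j|>s/|y_j|\})=P(|A_j|>s/|y_j|)\le(|y_j|/s)^{\alpha+\delta}E|A_j|^{\alpha+\delta}$, so, using $|y_j|\le b$ and $\nu(\{0<|y_j|\le b\})\le1$, the numerator is at most $C\,b^{\alpha+\delta}s^{-(\alpha+\delta)}$; since $(\nu\circledast\rho)(\{\bx:\,\|\bx\|>s\})=P(\|\bfX\|>s)$ is \regvary\ of index $-\alpha$, the quotient $s^{-(\alpha+\delta)}/P(\|\bfX\|>s)$ tends to $0$, so the inner $\limsup$ in \eqref{e:small.tail} already vanishes for every $b$. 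Finally, \eqref{e:regvar.cond} is exactly the assumption \eqref{e:cond.product}, because $\int_{\bbr^d}|x_j|^{\alpha}\prod_{k=1}^d|x_k|^{i\theta_k}\prod_{k=1}^d(\sign(x_k))^{m_k}\,\rho(d\bx)$ equals $E\bigl(|A_j|^{\alpha}\prod_{k=1}^d|A_k|^{i\theta_k}\prod_{k=1}^d(\sign(A_k))^{m_k}\bigr)$.

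With the hypotheses verified, Theorem~\ref{t:inverse.general} shows that $\nu$ is \regvary\ with index $\alpha$, that is, $\bfZ\in\RV(\alpha,\mu_{\bfZ})$ for some tail measure $\mu_{\bfZ}$, and also that $\mu_s(\cdot)=P(s^{-1}\bfZ\in\cdot)/P(\|\bfX\|>s)$ converges vaguely to a non-null $\mu_\ast$ with $\mu_\ast\circledast\rho=\mu_{\bfX}$. To obtain \eqref{e:product.regvar}, I would compare $\mu_s$ with the normalization defining $\mu_{\bfZ}$ on a set $\{\bx:\,\|\bx\|>t\}$ that is a continuity set of both $\mu_\ast$ and $\mu_{\bfZ}$ and on which each has positive finite mass; this forces $P(\|\bfZ\|>s)/P(\|\bfX\|>s)$ to converge to a finite positive constant $c$, whence $\mu_\ast=c\,\mu_{\bfZ}$ and $P(s^{-1}\bfX\in\cdot)/P(\|\bfZ\|>s)=\bigl(P(s^{-1}\bfX\in\cdot)/P(\|\bfX\|>s)\bigr)\,\bigl(P(\|\bfX\|>s)/P(\|\bfZ\|>s)\bigr)\stv c^{-1}\mu_{\bfX}$, a non-null vague limit. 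The Breiman-type result of \cite{basrak:davis:mikosch:2002b} then identifies this limit with $E[\mu_{\bfZ}\circ\bfA^{-1}]$, which is precisely \eqref{e:product.regvar}. I expect everything after Theorem~\ref{t:inverse.general} is applied to be routine; the only point calling for genuine care is this last step, namely verifying that $P(\|\bfZ\|>s)/P(\|\bfX\|>s)$ has a positive finite limit so that the right-hand side of \eqref{e:product.regvar} is non-null. Since the analytic core, the cancellation property, is already supplied by Theorems~\ref{prop:2} and \ref{t:inverse.general}, I do not anticipate a serious obstacle.
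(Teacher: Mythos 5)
Your proposal is correct and follows exactly the route the paper intends: the paper states that Theorem~\ref{t:product.inverse} is an immediate application of Theorem~\ref{t:inverse.general} with $\rho$ the law of $(A_1,\ldots,A_d)$ and $\nu$ the law of $\bfZ$, and your verification of \eqref{e:small.rho}, \eqref{e:small.tail} and \eqref{e:regvar.cond} from the moment hypothesis and \eqref{e:cond.product} is precisely the omitted routine work. The final identification of the limit could be shortened by noting $E[\mu_{\bfZ}\circ\bfA^{-1}]=\mu_{\bfZ}\circledast\rho$ for diagonal $\bfA$, so \eqref{e:product.regvar} follows directly from $\mu_\ast\circledast\rho=\mu_{\bfX}$ and $\mu_\ast=c\,\mu_{\bfZ}$, but your appeal to the direct Breiman-type result is equally valid.
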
 
A special case is multiplication 
of a random vector by an independent scalar random variable, 
corresponding to $A_1=\ldots = A_d=A$ for 
some random variable $A$. The following corollary restates Theorem 
\ref{t:product.inverse} in this special case.  
\begin{corollary} \label{c:1dimprod.inverse} 
Let $A$ be a random variable independent of a $d$-dimensional random 
vector $\bfZ$ such that $\bfX=A \bfZ$ is regularly 
varying with index $\alpha>0$. If $E|A|^{\alpha+\delta}< 
\infty$ for some $\delta>0$ and  
\beam\label{eq:sym} 
&&E|A|^{\alpha+i\theta}\ne 0\,,\quad \theta\in\bbr\,,\\ 
&&EA_+^{\alpha+i\theta}\ne EA_-^{\alpha+i\theta}\,,\quad \theta\in\bbr\,,\label{eq:syma} 
\eeam 
then $\bfZ$ is regularly 
varying with index $\alpha>0$. Moreover,  the tail 
 measure $\mu_\bfZ$ of $\bfZ$  satisfies  
\beao 
\dfrac{P(s^{-1}\bfX \in \cdot)}{P(|\bfZ|>s)}\stv  
   EA_+^{\alpha}\,\mu_\bfZ(\cdot) + 
  EA_-^{\alpha} \,\mu_\bfZ(-\cdot)\,,\quad s\to\infty\,, 
\eeao 
where $A_+=\max(A,0), \, A_-=\max(-A,0)$.  
\end{corollary}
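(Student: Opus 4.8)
The plan is to obtain Corollary~\ref{c:1dimprod.inverse} as the special case of Theorem~\ref{t:product.inverse} in which $A_1=\cdots=A_d=A$, i.e.\ $\bfA={\rm diag}(A,\ldots,A)=A\,I_d$, followed by an explicit computation of the limiting measure. First I would check the remaining hypotheses of Theorem~\ref{t:product.inverse} in this setting: the requirement $P(A_j=0)=0$ for all $j$ is just $P(A=0)=0$, and, since any matrix norm is absolutely homogeneous, $\|\bfA\|=|A|\,\|I_d\|$, so $E\|\bfA\|^{\alpha+\delta}=\|I_d\|^{\alpha+\delta}\,E|A|^{\alpha+\delta}<\infty$. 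The only point with any content is to verify that \eqref{eq:sym} and \eqref{eq:syma} together imply the cancellation condition \eqref{e:cond.product} in this specialization.

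For that step, with $A_1=\cdots=A_d=A$ the left-hand side of \eqref{e:cond.product} becomes
\[
E\!\left(|A|^{\alpha}\,|A|^{\,i\sum_{k=1}^d\theta_k}\,\bigl({\rm sign}(A)\bigr)^{\sum_{k=1}^d m_k}\right).
\]
Because $P(A=0)=0$, we have $\bigl({\rm sign}(A)\bigr)^2=1$ almost surely, so $\sum_k m_k$ enters only through its parity; and as $(\theta_1,\ldots,\theta_d)$ ranges over $\bbr^d$, the quantity $\theta:=\sum_k\theta_k$ ranges over all of $\bbr$. Hence the displayed expression equals $E|A|^{\alpha+i\theta}$ when $\sum_k m_k$ is even, and $E\bigl(|A|^{\alpha+i\theta}\,{\rm sign}(A)\bigr)=EA_+^{\alpha+i\theta}-EA_-^{\alpha+i\theta}$ when $\sum_k m_k$ is odd (using $|A|^{\alpha+i\theta}\,{\rm sign}(A)=A_+^{\alpha+i\theta}-A_-^{\alpha+i\theta}$ a.s.). The even case is covered by \eqref{eq:sym} and the odd case by \eqref{eq:syma}, so \eqref{e:cond.product} holds for every admissible $j$, $(m_k)$ and $(\theta_k)$. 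Theorem~\ref{t:product.inverse} then yields $\bfZ\in\RV(\alpha,\mu_{\bfZ})$ together with $P(s^{-1}\bfX\in\cdot)/P(\|\bfZ\|>s)\stv E[\mu_{\bfZ}\circ\bfA^{-1}]$.

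It remains to identify $E[\mu_{\bfZ}\circ\bfA^{-1}]$. For a Borel set $B$ bounded away from the origin, $\bfA^{-1}(B)=\{\by:\,A\by\in B\}=A^{-1}B$; conditioning on $A$ and using the scaling identity $\mu_{\bfZ}(tB)=t^{-\alpha}\mu_{\bfZ}(B)$, $t>0$, one gets $\mu_{\bfZ}(A^{-1}B)=A^{\alpha}\mu_{\bfZ}(B)$ on $\{A>0\}$ and, writing $A^{-1}B=|A|^{-1}(-B)$, $\mu_{\bfZ}(A^{-1}B)=|A|^{\alpha}\mu_{\bfZ}(-B)$ on $\{A<0\}$. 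Taking expectations and using $A_+^{\alpha}=|A|^{\alpha}\mathbf{1}_{\{A>0\}}$, $A_-^{\alpha}=|A|^{\alpha}\mathbf{1}_{\{A<0\}}$ a.s., this gives
\[
E[\mu_{\bfZ}\circ\bfA^{-1}](\cdot)=EA_+^{\alpha}\,\mu_{\bfZ}(\cdot)+EA_-^{\alpha}\,\mu_{\bfZ}(-\cdot),
\]
which is the claimed limit. There is no genuine obstacle here, since the substance lies entirely in Theorem~\ref{t:product.inverse}; the only steps requiring care are the parity reduction of the family \eqref{e:cond.product}, indexed by $(m_1,\ldots,m_d)\in\{0,1\}^d$ and $(\theta_1,\ldots,\theta_d)\in\bbr^d$, to the two scalar conditions \eqref{eq:sym}--\eqref{eq:syma} (a reduction which is in fact an equivalence, so these conditions are also necessary), and the sign bookkeeping in the scaling of $\mu_{\bfZ}$ for negative values of $A$, which is what produces the reflected term $\mu_{\bfZ}(-\cdot)$.
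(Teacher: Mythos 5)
Your proposal is correct and follows exactly the route the paper intends: the paper offers no separate proof, simply asserting that the corollary ``restates'' Theorem~\ref{t:product.inverse} in the case $A_1=\cdots=A_d=A$, and your parity reduction of \eqref{e:cond.product} to \eqref{eq:sym}--\eqref{eq:syma} together with the scaling computation of $E[\mu_{\bfZ}\circ\bfA^{-1}]$ supplies precisely the details left implicit there. The only cosmetic remark is that $P(A=0)=0$, which you correctly flag as needed to invoke the theorem, is tacitly inherited from Theorem~\ref{t:product.inverse} rather than restated in the corollary.
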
 
Using terminology similar to that of the previous section, we say that a  
random variable $A$ is  
{\em $\alpha$-\regvar\ determining} if \regvar\  
 with index $\alpha$ of $\bfX=A \bfZ$  
 implies \regvar\ 
of $\bfZ$.  Corollary \ref{c:1dimprod.inverse} shows that if $A$ 
satisfies both conditions \eqref{eq:sym} and \eqref{eq:syma}, then $A$ 
is  $\alpha$-\regvar\ determining. On the other hand,  
a construction similar to that in Remark 
\ref{rk:converse} shows that, if one of the conditions \eqref{eq:sym} 
and \eqref{eq:syma} fails, then one can construct an example of a 
random vector $\bfZ$ that is not regularly varying but $\bfX=A 
\bfZ$ is regularly varying with index $\alpha$. Therefore,  
conditions \eqref{eq:sym} and \eqref{eq:syma} are necessary and 
sufficient for $A$ being $\alpha$-\regvar\ determining.  
 
\cite{jacobsen:mikosch:rosinski:samorodnitsky:2008}, 
Theorem~4.2,  
proved this result for positive $A$. They gave various examples 
of \ds s on $(0,\infty)$ which are $\alpha$-\regvar\ determining, 
including the gamma, log-normal, Pareto \ds s, the \ds\ of the  
powers of the absolute value of a symmetric normal random variable,  
 of the   
absolute values of a Cauchy random variable  
(for $\alpha<1$) and any positive \rv\ whose  
log-transform is \id .   The condition in \eqref{eq:syma} rules  
out a whole 
class of important \ds s: no member of the class of symmetric \ds s is 
$\alpha$-\regvar\ determining. Even  
non-symmetric \ds s with $EA_+^\alpha=EA_-^{\alpha}$ are not 
$\alpha$-\regvar\ determining.  For a further example, consider a 
uniform random variable $A\sim U(a,b)$ for $a<b$. If $a=-b$, then $A$ 
cannot be $\alpha$-\regvar\ determining since it has a symmetric 
distribution. On the other hand, an elementary calculation shows that 
in all other cases both conditions \eqref{eq:sym} 
and \eqref{eq:syma} hold. Therefore, the only non-$\alpha$-\regvar\ 
determining uniform random variables are the symmetric ones.   
 
In financial \tsa , models for returns are often  
of the form $X_t=A_tZ_t$, where $(A_t)$ is some volatility \seq\ and 
$(Z_t)$ is an iid multiplicative noise \seq\ \st\ $A_t$ and $Z_t$ are 
independent for every $t$ and $(X_t)$ constitutes a strictly 
stationary \seq . In most parts of the literature it is assumed that 
the volatility $A_t$ is non-negative. It is often assumed that  
$X_t$ is heavy-tailed, e.g. \regvary\ with some index 
$\alpha>0$; see \cite{davis:mikosch:2009a,davis:mikosch:2009b}. 
 Notice that $A_t$ and $Z_t$ are not observable;  it depends 
on the model to which of the variables $A_t$ or $Z_t$ 
one assigns \regvar . For example, in the case of a GARCH process $(X_t)$, 
$(A_t)$ is \regvary\ with index $\alpha>0$ and the iid noise $(Z_t)$ 
has  lighter tails and  is symmetric.  
On the other hand, if one only assumes that $X_t$ is \regvary\ with 
index $\alpha$  
and  $E|Z|^{\alpha+\delta}<\infty$ for some $\delta>0$ and  
$Z$  is symmetric,  one cannot conclude that $A_t$ is \regvary .

\section{Non-diagonal matrices} \label{sec:non-diagonal} 
\setcounter{equation}{0} 
 
The (direct) statements of Examples \ref{ex:linear.process} and 
\ref{ex:products} of Section \ref{sec:1}  
deal with transformations of regularly varying 
random vectors involving matrices that do not have to be diagonal 
matrices. On the other hand, all the converse statements of Sections 
\ref{sec:sums} and \ref{sec:products} deal only with diagonal 
matrices.  Generally, we do not know how to solve inverse problems 
involving non-diagonal matrices. This section describes one of the 
very few ``non-diagonal'' situations where we can prove a converse 
statement.  We restrict ourselves to the case of finite weighted 
sums and square matrices.  
 
\begin{theorem} \label{prop:5} 
Let  
$\bfX=\sum_{j=1}^q \Psi_j\,\bfZ_j$, where  
$\bfZ_j$, $j=1,\ldots,q$, are iid $\bbr^d$-valued random vectors and  
$\Psi_j$, $j=1,\ldots,q$,  deterministic $(d\times 
d)$-matrices. Assume that $\bfX\in\RV(\alpha,\mu_{\bfX})$ for some 
$\alpha>0$.  If all the matrices $\Psi_j$, $j=1,\ldots,q$ are 
invertible, and  
\beam\label{eq:alph} 
(\gamma(\Psi_1))^{\alpha} >\sum_{j=2}^q \|\Psi_j\|^{\alpha}\,, 
\eeam 
where $\gamma(\Psi_1)=\min_{\bfz\in \bbs^{d-1}} |\Psi_1\bfz|$ 
and $\|\Psi_j\|$ is the operator norm of $\Psi_j$, $j=1,\ldots,q$,  
then $\bfZ\in {\rm RV}(\alpha,\mu_\bfZ)$ and $\mu_{\bfZ}$ satisfies  
\eqref{eq:10}.  
\end{theorem}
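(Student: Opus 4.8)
\emph{Plan.} The strategy is the one behind Theorems~\ref{t:subseq.char}--\ref{t:inverse.general}: obtain a two‑sided comparison of the tails of $\bfZ$ and $\bfX$, deduce relative compactness of the rescaled laws of $\bfZ$, pass to the limit in a ``one‑big‑jump'' tail identity to get an equation satisfied by every subsequential limit, and finally show that this equation has a unique solution. The novelty is that here uniqueness comes not from Fourier cancellation but directly from the block‑dominance hypothesis \eqref{eq:alph}, which makes a Neumann‑type iteration contract.

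\emph{Steps 1--2.} First I would prove $P(\|\bfZ\|>s)\asymp P(\|\bfX\|>s)$. For the bound $P(\|\bfZ\|>s)\le C\,P(\|\bfX\|>s)$ I would write $\bfX=\Psi_1\bfZ_1+\sum_{j\ge 2}\Psi_j\bfZ_j$, pick $M$ with $P(\|\sum_{j\ge2}\Psi_j\bfZ_j\|\le M)>0$, use $\|\Psi_1\bfz\|\ge\gamma(\Psi_1)\|\bfz\|$ together with invertibility of $\Psi_1$ (so that $\gamma(\Psi_1)>0$) and regular variation of $\bfX$; this is exactly the analogue of the derivation of \eqref{eq:15a}. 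The reverse bound comes from estimating $P(\|\bfX\|>s)\le\sum_{j=1}^q P(\|\bfZ\|>s/(q\|\Psi_j\|))$ and again using regular variation of $\bfX$. With this in hand I would invoke the finite‑sum analogue of Lemma~\ref{l:sum.of.tails} (whose proof simplifies here since $q<\infty$ and all $\Psi_j$ are bounded) to get, for every $\mu_{\bfX}$‑continuity set $A$ bounded away from the origin,
\[
P(s^{-1}\bfX\in A)\sim\sum_{j=1}^q P\bigl(s^{-1}\bfZ\in\Psi_j^{-1}A\bigr),\qquad s\to\infty .
\]
Writing $\mu_s(\cdot)=P(s^{-1}\bfZ\in\cdot)/P(\|\bfX\|>s)$, this says $\sum_{j=1}^q\mu_s(\Psi_j^{-1}A)\to\mu_{\bfX}(A)$.

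\emph{Steps 3--4 (the crux).} Since $\mu_s(B_\tau^c)\le C\,P(\|\bfX\|>\tau s)/P(\|\bfX\|>s)\to C\tau^{-\alpha}$ for every $\tau>0$, the family $(\mu_s)$ is vaguely relatively compact (Proposition~3.16 in \cite{resnick:1987}), and any subsequential limit $\mu_\ast$ charges no infinite point and satisfies $\mu_\ast(B_\tau^c)\le C\tau^{-\alpha}$ for all $\tau>0$. Letting $s\to\infty$ along a convergent subsequence, over a convergence‑determining class of sets $A$ for which the sets $\Psi_j^{-1}A$ are $\mu_\ast$‑continuity sets, I would conclude that every such $\mu_\ast$ solves $\sum_{j=1}^q\mu_\ast\circ\Psi_j^{-1}=\mu_{\bfX}$. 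If $\mu_\ast^{(1)},\mu_\ast^{(2)}$ are two solutions, then $\sigma:=\mu_\ast^{(1)}-\mu_\ast^{(2)}$ obeys $|\sigma|(B_\tau^c)\le 2C\tau^{-\alpha}$ for all $\tau>0$ and $\sigma\circ\Psi_1^{-1}=-\sum_{j=2}^q\sigma\circ\Psi_j^{-1}$; applying $\Psi_1$ and iterating $n$ times,
\[
\sigma(B)=(-1)^n\!\!\sum_{(l_1,\dots,l_n)\in\{2,\dots,q\}^n}\!\!\sigma\bigl(\Phi_{l_n}\cdots\Phi_{l_1}B\bigr),\qquad\Phi_j:=\Psi_j^{-1}\Psi_1 .
\]
For $B\subseteq B_\delta^c$ one has $\Phi_{l_n}\cdots\Phi_{l_1}B\subseteq B_r^c$ with $r=\delta\prod_i\gamma(\Phi_{l_i})$, and since $\gamma(\Phi_j)=1/\|\Phi_j^{-1}\|\ge\gamma(\Psi_1)/\|\Psi_j\|$ the right‑hand side above is bounded in absolute value by $2C\delta^{-\alpha}\bigl(\gamma(\Psi_1)^{-\alpha}\sum_{j=2}^q\|\Psi_j\|^{\alpha}\bigr)^{n}$, which tends to $0$ by \eqref{eq:alph}. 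Hence $\sigma=0$, the limit $\mu_\ast=:\mu_{\bfZ}$ is unique, so $\mu_s\to\mu_{\bfZ}$ vaguely; $\mu_{\bfZ}$ is non‑null (Step~1 gives $\mu_{\bfZ}(B_1^c)\ge c>0$) and homogeneous of index $\alpha$ (by regular variation of $P(\|\bfX\|>\cdot)$). Normalizing by $P(\|\bfZ\|>s)$ yields $\bfZ\in\RV(\alpha,\mu_{\bfZ})$, and reinserting this into the tail identity of Step~2 gives \eqref{eq:10}.

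\emph{Main obstacle.} The heart of the matter is Step~4: recognizing that \eqref{eq:alph}, expressed through the operator norms $\|\Psi_j\|$ and the minimal stretch $\gamma(\Psi_1)=1/\|\Psi_1^{-1}\|$, is exactly the contraction condition for iterating $\sigma=-\sum_{j\ge2}\sigma\circ\Phi_j^{-1}$, via the elementary estimates $\gamma(\Phi_{l_n}\cdots\Phi_{l_1})\ge\prod_i\gamma(\Phi_{l_i})$ and $\gamma(\Phi_j)\ge\gamma(\Psi_1)/\|\Psi_j\|$ together with the homogeneous tail bound on $|\sigma|$ valid down to all scales. The remaining ingredients — the finite‑sum version of Lemma~\ref{l:sum.of.tails} and the choice of a convergence‑determining family of test sets in Step~3 so that the vague limit commutes with the invertible maps $\Psi_j$ — I expect to be routine.
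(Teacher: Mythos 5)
Your proposal is correct and follows essentially the same route as the paper: the finite-sum version of Lemma \ref{l:sum.of.tails}, vague relative compactness of $P(s^{-1}\bfZ\in\cdot)/P(\|\bfX\|>s)$, the subsequential limit equation $\sum_j\mu_\ast\circ\Psi_j^{-1}=\mu_\bfX$, and uniqueness via iterating with $T_j=\Psi_j^{-1}\Psi_1$ and the contraction supplied by \eqref{eq:alph} combined with the $cs^{-\alpha}$ tail bound on $\mu_\ast$. The only (cosmetic) difference is that you iterate the homogeneous equation for the difference of two solutions, whereas the paper iterates the full equation to exhibit $\mu_\ast$ as a convergent Neumann-type series in $\mu_\bfX$.
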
 
\begin{proof} 
An argument similar to that in Lemma \ref{l:sum.of.tails} shows that 
under the assumptions of the theorem a finite version of 
\eqref{eq:14aa} holds: for any  
Borel set $A\subset \bbr^d$ bounded away from the origin \st\ $A$ is a 
continuity set with respect to the tail measure $\mu_{\bfX}$, 
$$ 
P(s^{-1}\bfX\in A)\sim \sum_{i=1}^q P\bigl( s^{-1}\Psi_i\bfZ\in 
A\bigr)\,,\quad s\to \infty\,.   
$$ 
This allows us to proceed as in Theorem \ref{t:subseq.char} to see 
that the family of measures  
\begin{equation} \label{e:an.fam} 
\left( \dfrac{P(s^{-1}\bfZ\in \cdot )}{P(|\bfX|>s)}\right)_{s \ge 1} 
\end{equation} 
is vaguely tight in $\ov \bbr^m_0$, and any vague subsequent limit 
$\mu_\ast$ of this family satisfies  
\beam\label{eq:lat} 
\mu_\bfX =\sum_{j=1}^q 
\mu_{\ast}\circ \Psi_j^{-1}\,. 
\eeam 
Let $T_j= \Psi_j^{-1}\Psi_1$, $j=2,\ldots, q$. Then by \eqref{eq:lat},   
for any measurable set $B\subset \bbr^d$ bounded away from zero,   
\beam\label{eq:iter} 
\mu_\ast(B)=\mu_\bfX(\Psi_1 B)- \sum_{j=2}^q \mu_\ast(T_j B)\,. 
\eeam 
Replacing $B$ with $T_j B$ for $j=2,\ldots, q$ and iterating 
\eqref{eq:iter}, we obtain for $n=1,2,\ldots,$ 
\beam\label{eq:nu} 
\mu_\ast(B)&=&\mu_\bfX(\Psi_1 B)- \sum_{j=2}^q \mu_\bfX(\Psi_1 T_j B) 
+ \sum_{j_1=2}^q\sum_{j_2=2}^q 
\mu_\ast(T_{j_2} T_{j_1} B)\nonumber\\ &=&\cdots\nonumber  \\[2mm] 
&=&\sum_{k=0}^n (-1)^k\sum_{j_1=2}^q \cdots\sum_{j_k=2}^q \mu_\bfX 
(\Psi_1T_{j_k}\cdots 
T_{j_1} B)\\ 
&&+(-1)^{n+1}\sum_{j_1=2}^q \cdots \sum_{j_{n+1}=2}^q 
\mu_\ast(T_{j_{n+1}}\cdots 
T_{j_1} B)\,.\nonumber 
\eeam 
Clearly,  for every $n\ge 1$ and $j_1,\ldots,j_{n+1} =2,\ldots,q$, 
\beam\label{eq:con} 
\inf_{\bfz\in T_{j_{n+1}}\cdots T_{j_1}B} |\bfz|\ge  
\inf_{\bfz\in B} |\bfz|\,(\gamma(\Psi_1))^{n+1}\,\prod_{k=1}^{n+1} \|\Psi_{j_k}\|^{-1}\,. 
\eeam 
Furthermore, it follows from  \eqref{eq:lat} that, for some 
$c>0$,  
\beao 
\mu_\ast(\{\bfz\in\bbr^d: |\bfz|>s\})\le c\, s^{-\alpha}\,,\quad s>0\,. 
\eeao 
Therefore we conclude by \eqref{eq:con} and  \eqref{eq:alph} that 
\beao 
\lefteqn{\sum_{j_1=2}^q \cdots \sum_{j_{n+1}=2}^q 
\mu_\ast(T_{j_{n+1}}\cdots 
T_{j_1} B)}\\[2mm]&\le& c \,\bigl( \inf_{\bfz\in B} |\bfz|\bigr)^{-\alpha}  
\sum_{j_1=2}^q \cdots \sum_{j_{n+1}=2}^q 
\left(\,(\gamma(\Psi_1))^{n+1}\,\prod_{k=1}^{n+1} 
  \|\Psi_{j_k}\|^{-1}\right)^{-\alpha}\\[2mm] 
&=&c\,\, \bigl( \inf_{\bfz\in B} |\bfz|\bigr)^{-\alpha}  
(\gamma(\Psi_1))^{-\alpha \,(n+1)} 
\left(\sum_{j=2}^q 
  \|\Psi_{j}\|^{\alpha}\right)^{n+1}\to 0\,. 
\eeao 
Thus by virtue of \eqref{eq:nu}, 
\beao 
\mu_\ast(B)=\lim_{\nto}\sum_{k=0}^n (-1)^k\sum_{j_1=2}^q \cdots\sum_{j_k=2}^q \mu_\bfX 
(T_{j_k}\cdots T_{j_1} B)\,. 
\eeao 
This means that $\mu_\ast$ is uniquely determined by the \ms\ 
$\mu_\bfX$. 
Hence all the subsequential vague limits of \eqref{e:an.fam} coincide. 
Therefore, $\bfZ\in {\rm RV}(\alpha,\mu_\bfZ)$ and \eqref{eq:10} holds.  
\end{proof} 
\begin{remark} \label{rk:reduce} 
{ 
\rm 
Note that in the special case of diagonal matrices $(\Psi_j)$ with 
identical elements on the diagonals, the conditions in Theorem \ref{prop:5} 
coincide with those in Example \ref{exam:2} above.  
} 
\end{remark} 
 
\bre\label{rem:1}{\em  
The conditions in Theorem \ref{prop:5} can be slightly weakened by 
assuming, instead of  \eqref{eq:alph}, that 
\beam \label{e:mult.A} 
(\gamma(A\Psi_1))^{\alpha} >\sum_{j=2}^q \|A\Psi_j\|^{\alpha} 
\eeam  
for some invertible matrix $A$. Indeed,  regular variation of $\bfX$ 
implies regular variation of the  vector $A\bfX$, and regular 
variation of the  $A\bfZ$ implies regular variation of $\bfZ$. It is 
not difficult to construct examples where \eqref{e:mult.A} holds but 
\eqref{eq:alph} fails.}  
\ere 
  
\noindent 
{\bf Acknowledgment.} Ewa Damek's research  
was partially supported by  
the NCN grant DEC-2012/05/B/ST1/00692. 
Thomas Mikosch's  research  
was partially supported by  
the Danish Natural Science Research Council (FNU) Grants 
09-072331 "Point process modelling and statistical inference" 
and 10-084172 ``Heavy tail phenomena: Modeling and estimation''. 
Jan Rosi\'nski's research  
was partially supported by the Simons Foundation grant 281440.  
Gennady Samorodnitsky's research was 
partially supported by the NSF grant DMS-1005903,  
 and the ARO grants W911NF-07-1-0078 and and W911NF-12-10385 at 
 Cornell University. All authors are very much indebted to Darek 
 Buraczewski (University of Wroclaw) who suggested to use Theorem 
9.3 in \cite{rudin:1973} instead of the theory of distributions which was 
employed in \cite{jacobsen:mikosch:rosinski:samorodnitsky:2008}. 
This approach simplified our proofs.

\end{document}